\providecommand{\defeq}{\coloneqq}
\providecommand{\Cat}{\mathbf{Cat}}
\providecommand{\Set}{\mathbf{Set}}
\providecommand{\Pos}{\mathbf{Pos}}
\providecommand{\Top}{\mathbf{Top}}
\providecommand{\ConTop}{\mathbf{ConTop}}
\providecommand{\LocConTop}{\mathbf{LocConTop}}
\providecommand{\ConCat}{\mathbf{ConCat}}
\providecommand{\Qbs}{\mathbf{Qbs}}
\providecommand{\PsTop}{\mathbf{PsTop}}
\providecommand{\wCpo}{\mathbf{wCpo}}
\providecommand{\FinSet}{\mathbf{FinSet}}
\providecommand{\FinFam}{\mathbf{FinFam}}
\providecommand{\FinDist}[1]{\mathbf{FinDist}(#1)}
\providecommand{\CoProdCat}{\mathbf{CoProdCat}}
\providecommand{\ProdCat}{\mathbf{ProdCat}}
\providecommand{\DistCat}{\mathbf{DistCat}}
\providecommand{\NN}{\mathbb{N}}
\providecommand{\RR}{\mathbb{R}}
\providecommand{\catA}{\mathcal{A}}
\providecommand{\catB}{\mathcal{B}}
\providecommand{\catC}{\mathcal{C}}
\providecommand{\catD}{\mathcal{D}}
\providecommand{\catE}{\mathcal{E}}
\providecommand{\catV}{\mathcal{V}}
\providecommand{\jd}{d}
\providecommand{\Fam}{\mathsf{Fam}}
\providecommand{\IFam}{\mathfrak{Fam}}
\providecommand{\Dist}{\mathsf{Dist}}
\providecommand{\Par}{\mathsf{Par}}
\providecommand{\terminal}{1}
\providecommand{\initial}{0}
\providecommand{\id}{\mathrm{id}}
\providecommand{\sumfam}[2]{[#1\mid #2]}
\providecommand{\prodfam}[2]{\langle #1\mid #2\rangle}
\DeclareMathOperator{\Ob}{Ob}
\newcommand{\Loc}{\mathbf{Loc}}
\newcommand{\Frm}{\mathbf{Frm}}
\theoremstyle{plain}
\newtheorem{theorem}{Theorem}[section]
\newtheorem{proposition}[theorem]{Proposition}
\newtheorem{lemmma}[theorem]{Lemma}
\newtheorem{corollary}[theorem]{Corollary}
\newtheorem{openquestion}[theorem]{Open Question}
\theoremstyle{definition}
\newtheorem{definition}[theorem]{Definition}
\theoremstyle{remark}
\newtheorem{example}[theorem]{Example}
\newtheorem{remark}[theorem]{Remark}
\newtheorem{counterexample}[theorem]{Counterexample}
\numberwithin{equation}{section}
\begin{document}

\title{Free Doubly-Infinitary Distributive Categories are Cartesian Closed}
\author{Fernando Lucatelli Nunes\textsuperscript{1}\thanks{Corresponding author: \texttt{fln@uc.pt}.}
  \and Matthijs V\'ak\'ar\textsuperscript{2}\thanks{\texttt{m.i.l.vakar@uu.nl}.}}
\date{}

\maketitle

\begin{center}
\small
\textsuperscript{1}CMUC, Department of Mathematics, University of Coimbra,
3000-143 Coimbra, Portugal\\
\textsuperscript{2}Department of Information and Computing Sciences,
Utrecht University, Netherlands
\end{center}

\begin{abstract}
	We study the composite free completion
	\[
	\Dist(\catC)\defeq \Fam\!\bigl(\Fam(\catC^{op})^{op}\bigr),
	\]
	obtained by first freely adjoining small products and then freely adjoining small coproducts. A natural pseudodistributive law equips this endo-pseudofunctor with a composite pseudomonad structure. Its pseudoalgebras are precisely the categories with small products and small coproducts in which small products distribute over small coproducts. We call such categories \emph{doubly-infinitary distributive}. This condition is natural, but does not seem to have been systematically isolated in the literature. Thus $\Dist(\catC)$ is the free doubly-infinitary distributive category on $\catC$. Our main result is that $\Dist(\catC)$ is \emph{cartesian closed}. Finally, we compare doubly-infinitary distributivity with extensivity, ordinary infinitary distributivity, and cartesian closedness by means of separating examples.
\end{abstract}

\noindent\textbf{Keywords.} Free coproduct completion; distributive categories; cartesian closed categories; multiexponentiation; pseudomonads; pseudodistributive laws.

\medskip

\noindent\textbf{MSC Classification.} 18A35, 18C15, 18D15, 18N10, 18A40

\medskip

\section{Introduction}
Many categorical structures are governed not only by the existence of limits and colimits, but by the way these constructions interact. Distributivity phenomena provide a basic family of such interactions, ranging from compatibility between finite products and finite coproducts to stronger forms involving more general limits and colimits. Classical accounts include finitary and infinitary distributive categories~\cite{carboni1993introduction}, as well as stronger variants such as completely distributive categories~\cite{marmolejo2012completely}. The present paper isolates and studies a natural intermediate condition in this landscape: categories with small products and small coproducts in which small products distribute over small coproducts.

We begin by recalling the familiar distributivity conditions in a form that makes the comparison with our setting immediate.

\subsection*{Distributivity conditions}

Let $\catC$ be a category with finite products and finite coproducts. It is \emph{(finitary) distributive} if, for all objects $A,B,C\in\catC$, the canonical comparison map
\begin{equation}\label{eq:canonical-comparison-finitary}
	(A\times B)\sqcup (A\times C)\ \xrightarrow{\ \cong\ }\ A\times (B\sqcup C)
\end{equation}
is invertible. Equivalently, for every object $A$, the product functor
\begin{equation}\label{eq:product-for-A}
	A\times -:\catC\to\catC
\end{equation}
preserves finite coproducts.

If $\catC$ has all small coproducts, one may strengthen this to \emph{infinitary} distributivity by requiring that, for every set $I$ and every family $(B_i)_{i\in I}$, the canonical comparison
\begin{equation}\label{eq:canonical-comparison-infinitary}
	\coprod_{i\in I}(A\times B_i)\ \xrightarrow{\ \cong\ }\ A\times\left(\coprod_{i\in I}B_i\right)
\end{equation}
is invertible; equivalently,~\eqref{eq:product-for-A} preserves all small coproducts.

The present paper studies a different strengthening, in which \emph{small products distribute over small coproducts}. The basic comparison has the form
\begin{equation}\label{eq:canonical-morphism-that-defines-intro}
	\coprod_{r\in\prod_{j\in J}I_j}\ \prod_{j\in J}C_{j,r_j}
	\ \longrightarrow\
	\prod_{j\in J}\ \coprod_{i\in I_j}C_{j i},
\end{equation}
for a family of families $(C_{j i})_{j\in J,\ i\in I_j}$. The formal definition is given later in Definition~\ref{def:doubly-infinitary-distributivity}, together with its pseudomonadic interpretation. We call the categories satisfying this condition \emph{doubly-infinitary distributive categories}, and show that they are precisely the pseudoalgebras for the composite pseudomonad $\Dist$ studied in this paper. In the presence of the relevant completeness and cocompleteness, this condition lies strictly between ordinary infinitary distributivity and complete distributivity, as the examples below show. Although natural, it does not seem to have been systematically isolated in the literature; one contribution of this paper is to make this condition explicit.

The rest of the paper studies the free categorical construction naturally associated with this comparison. We first treat it concretely, as an explicit composite of free product and coproduct completions. Only later do we return to the pseudomonadic statement identifying its pseudoalgebras with the categories satisfying the above distributivity condition.

\subsection*{The free doubly-infinitary distributive category}

We denote by $\Fam(\catC)$ the free completion of $\catC$ under small coproducts. Dually, $\Fam(\catC^{op})^{op}$ is the free completion of $\catC$ under small products. The central concrete construction of the paper is the composite
\[
\Dist(\catC)\defeq \Fam\!\bigl(\Fam(\catC^{op})^{op}\bigr),
\]
whose objects may be viewed as formal coproducts of formal products of objects of $\catC$.

For the moment, $\Dist(\catC)$ should be understood as this explicit composite free completion. The terminology is justified later by the pseudomonadic analysis: the free product-completion and free coproduct-completion pseudomonads are related by a canonical pseudodistributive law, and the resulting composite pseudomonad has as its pseudoalgebras precisely the \textit{doubly-infinitary distributive categories} mentioned above. Thus $\Dist(\catC)$ is, in this sense, the free doubly-infinitary distributive category generated by $\catC$.

The main theorem is proved before the pseudomonadic verification, since it only uses the explicit description of $\Dist(\catC)$ as a category of formal coproducts of formal products. By construction, $\Dist(\catC)$ has small coproducts. By a well-known result on products in free coproduct completions, since the inner completion $\Fam(\catC^{op})^{op}$ has small products, $\Dist(\catC)$ also has small products. The additional structure proved in this paper is the following.

\medskip
\noindent\textbf{Main result.}
\emph{For every category $\catC$, the category $\Dist(\catC)$ is cartesian closed.}  Moreover, exponentials admit an explicit description (Theorem~\ref{thm:ccc}).

\medskip
The proof rests on a structural feature of the free product completion $\Fam(\catC^{op})^{op}$. The objects coming from $\catC$ satisfy a strong atomicity property: they are coconnected, or coindecomposable, in a suitable sense. This implies a form of \emph{multiexponentiation}: by denoting $\catD\defeq\Fam(\catC^{op})^{op}$, for fixed $Y$ and $Z$, the presheaf $\catD(-\times Y,Z)$ need not be representable, but it is a small coproduct of representables. Passing from $\catD$ to $\Fam(\catD)$ turns these familial representations into genuine exponentials. This proves cartesian closedness of $\Dist(\catC)$ and yields the explicit ``partial map'' formula for exponentials.

\subsection*{Outline and background}

The paper is organised as follows. Section~\ref{sec:FAM} recalls the free completion under small coproducts and its dual free completion under small products, and introduces the composite completion $\Dist(\catC)$. Section~\ref{sec:main-result} proves directly that $\Dist(\catC)$ is cartesian closed and gives an explicit formula for its exponentials.

Section~\ref{sec:pseudomonad-Dist} gives the structural explanation for the notation. It defines doubly-infinitary distributive categories, describes the relevant pseudodistributive law, and proves that the pseudoalgebras for $\Dist$ are precisely the doubly-infinitary distributive categories. Consequently, the category $\Dist(\catC)$ studied in Section~\ref{sec:main-result} is the free doubly-infinitary distributive category generated by $\catC$.

Section~\ref{sec:discrete-case} computes the free doubly-infinitary distributive completion on coproducts of categories, with special attention to the case of discrete categories. It also records a complementary bicategorical viewpoint, namely a biproduct phenomenon in the 2-category of categories with products. Section~\ref{sec:examples} then gives examples and counterexamples separating doubly-infinitary distributivity from extensivity, ordinary infinitary distributivity, and cartesian closedness.

Finally, Section~\ref{sec:final-remarks} discusses further properties of $\Dist(\catC)$, the role of non-canonical isomorphisms, open questions about generalised categorical structures, and the relation with G\"odel's Dialectica interpretation.

For background on the $\Fam$-construction and free product and coproduct completions, see~\cite{zbMATH05306928, zbMATH07186728, nunes2023chad, zbMATH07799814}. For pseudomonads, pseudoalgebras, and pseudodistributive laws, see~\cite{zbMATH04105188, MR3491845, zbMATH06970806, zbMATH02116176, zbMATH05256222, marmolejo1999distributive}.

\subsection*{Related work}
A closely related distributivity condition appears in the study of $\Pi$- and $\Sigma$-types in fibrations over a locally cartesian closed base $\mathbb{B}$; see, for example, von Glehn~\cite{von2018polynomials} and Glehn--Moss~\cite{glehnmoss2018}. Our focus is the special case of fibrations $\Fam(\catC)\to\Set$, where $\Pi$- and $\Sigma$-types reduce to products and coproducts in $\catC$; see~\cite[Theorem~3.5.2]{vakar2017search} or~\cite[Theorem~12]{vakar2015categorical}.

\section{Free product and coproduct completions}\label{sec:FAM}

We recall the free completion under small coproducts and its dual free completion under small products. Combining these constructions gives the composite completion $\Dist(\catC)$ whose cartesian closedness is proved in Section~\ref{sec:main-result}. Its pseudomonadic interpretation, including the formal definition of \textit{doubly-infinitary distributive categories}, is postponed to Section~\ref{sec:pseudomonad-Dist}.

The $\Fam$-construction is classical; see~\cite{zbMATH07186728, zbMATH05306928, carboni1993introduction, nunes2023chad, zbMATH07799814}.

\subsection{The category \texorpdfstring{$\Fam(\catC)$}{Fam(C)}}

Let $\jd:\Set\to\Cat$ denote the inclusion sending a set to the corresponding discrete category. For a category $\catC$, define the indexed category
\[
\IFam(\catC):\Set^{op}\to\Cat,
\qquad
\IFam(\catC)(I)\defeq \Cat(\jd(I),\catC)\cong \catC^{I},
\]
and, for a function $u:J\to I$, set $\IFam(\catC)(u)\defeq u^{*}$, reindexing along $\jd(u)$.

The Grothendieck construction $\int \IFam(\catC)$ yields a split fibration over $\Set$ whose total category is the \emph{family category} $\Fam(\catC)$. Concretely:

\begin{itemize}
	\item An object is a pair $(I,C)$ where $I$ is a set and $C:\jd(I)\to\catC$ is a functor, equivalently an $I$-indexed family $(C_i)_{i\in I}$ of objects of $\catC$. We often denote this object by the formal coproduct
	\[
	\sumfam{C_i}{i\in I}.
	\]
	
	\item A morphism $(I,C)\to (J,C')$ consists of a function $f:I\to J$ together with a family of arrows $(\varphi_i:C_i\to C'_{f(i)})_{i\in I}$ in $\catC$. Equivalently, it is a natural transformation $\varphi:C\Rightarrow C'\circ \jd(f)$.
\end{itemize}

Thus, for family objects $\sumfam{C_i}{i\in I}$ and $\sumfam{C'_j}{j\in J}$, one has
\[
\Fam(\catC)\Bigl(\sumfam{C_i}{i\in I},\,\sumfam{C'_j}{j\in J}\Bigr)
\ \cong\
\coprod_{f\in\Set(I,J)}\ \prod_{i\in I}\catC\bigl(C_i,\,C'_{f(i)}\bigr).
\]
Identities and composition are defined pointwise: the identity on $\sumfam{C_i}{i\in I}$ is
\[
(\id_I,(\id_{C_i})_{i\in I}),
\]
and if
\[
(f,(\varphi_i)_{i\in I}):\sumfam{C_i}{i\in I}\to\sumfam{C'_j}{j\in J}
\]
and
\[
(g,(\psi_j)_{j\in J}):\sumfam{C'_j}{j\in J}\to\sumfam{C''_k}{k\in K},
\]
then their composite is
\[
(g\circ f,(\psi_{f(i)}\circ \varphi_i)_{i\in I}).
\]

\subsection{Universal property and the dual completion}

The assignment $\catC\mapsto \Fam(\catC)$ extends to a lax-idempotent, or Kock--Z\"oberlein, pseudomonad on $\Cat$; see~\cite{kock1995monads, zbMATH01024330, zbMATH03680046, clementino2023lax}. Its unit
\[
\eta^\Sigma_\catC:\catC\to\Fam(\catC)
\]
sends an object $C\in\catC$ to the singleton family $\sumfam{C}{\{\ast\}}$, and sends a morphism $f:C\to D$ in $\catC$ to the morphism
\[
(\id_{\{\ast\}},(f)):\sumfam{C}{\{\ast\}}\to \sumfam{D}{\{\ast\}}.
\]

The multiplication
\[
\mu^\Sigma_\catC:\Fam(\Fam(\catC))\to\Fam(\catC)
\]
flattens a family of families:
\[
\mu^\Sigma_\catC\!\left(\sumfam{\,\sumfam{C_{j,i}}{i\in I_j}\,}{j\in J}\right)
\ =
\sumfam{C_{j,i}}{(j,i)\in \coprod_{j\in J} I_j}.
\]
On morphisms, $\mu^\Sigma_\catC$ acts by flattening the index functions. Concretely, a morphism in $\Fam(\Fam(\catC))$
\[
\Bigl(f,\ (h_j,(\varphi_{j,i})_{i\in I_j})_{j\in J}\Bigr):
\sumfam{\,\sumfam{C_{j,i}}{i\in I_j}\,}{j\in J}
\longrightarrow
\sumfam{\,\sumfam{C'_{j',i'}}{i'\in I'_{j'}}\,}{j'\in J'}
\]
consists of a function $f:J\to J'$ together with, for each $j\in J$, a morphism
\[
(h_j,(\varphi_{j,i})_{i\in I_j}):
\sumfam{C_{j,i}}{i\in I_j}\longrightarrow \sumfam{C'_{f(j),i'}}{i'\in I'_{f(j)}}
\]
in $\Fam(\catC)$. Its image under $\mu^\Sigma_\catC$ has index function
\[
\bar f:\coprod_{j\in J} I_j\longrightarrow \coprod_{j'\in J'} I'_{j'},
\qquad
\bar f(j,i)\defeq (f(j),h_j(i)),
\]
and component maps $\varphi_{j,i}:C_{j,i}\to C'_{f(j),h_j(i)}$.

If $\catC$ has small coproducts, there is a coproduct functor
\[
\coprod:\Fam(\catC)\to\catC,
\qquad
\sumfam{C_i}{i\in I}\longmapsto \coprod_{i\in I}C_i,
\]
and this functor is left adjoint to the unit $\eta^\Sigma_\catC$, by the defining hom-formula for $\Fam(\catC)$. The corresponding $2$-category of pseudoalgebras and pseudomorphisms is equivalent to the $2$-category $\CoProdCat$ of categories with small coproducts, coproduct-preserving functors, and natural transformations.

\begin{remark}[A lali witness for the Kock--Z\"oberlein property]\label{rem:Fam-KZ}
	Observe that the counit
	\[
	\coprod_\catC\eta^\Sigma_\catC\Rightarrow \id_\catC
	\]
	is the identity, since the coproduct of a singleton family is chosen to be the object itself with coprojection $\id$. Hence $\coprod_\catC\dashv \eta^\Sigma_\catC$ is a \emph{lali} adjunction; see, for instance,~\cite[0.3.B]{MR0213413} and~\cite[1.1]{clementino2023lax}. This is a characterisation of the Kock--Z\"oberlein, or lax-idempotent, condition for $\Fam$; see, for example,~\cite[Theorem~3.15]{clementino2023lax}.
\end{remark}

The composite
\[
\Cat \xrightarrow{op} \Cat^{co} \xrightarrow{\Fam(-)} \Cat^{co} \xrightarrow{op} \Cat
\]
inherits a colax-idempotent pseudomonad structure whose pseudoalgebras are categories with small products. We denote
\[
\Fam_{\Sigma}(\catC)\defeq \Fam(\catC),
\qquad
\Fam_{\Pi}(\catC)\defeq \Fam(\catC^{op})^{op}.
\]
Objects of $\Fam_{\Pi}(\catC)$ are denoted by formal products
\[
\prodfam{C_i}{i\in I},
\]
and a morphism
\[
\prodfam{C_i}{i\in I}\to \prodfam{D_{i'}}{i'\in I'}
\]
consists of a function $f:I'\to I$ and a family $(\alpha_{i'}:C_{f(i')}\to D_{i'})_{i'\in I'}$ in $\catC$.

\subsection{The composite completion and inclusions}

Define the composite completion
\begin{equation}\label{EQ:DIST}
	\Dist(\catC)\defeq \Fam_{\Sigma}\!\bigl(\Fam_{\Pi}(\catC)\bigr)
	=\Fam\!\bigl(\Fam(\catC^{op})^{op}\bigr).
\end{equation}
Thus an object of $\Dist(\catC)$ is a formal coproduct of formal products of objects of $\catC$.

The notation $\Dist$ is justified by the pseudomonadic result proved in Section~\ref{sec:pseudomonad-Dist}: the free product-completion and free coproduct-completion pseudomonads are related by a canonical pseudodistributive law, and the resulting composite pseudomonad has as its pseudoalgebras precisely the \textit{doubly-infinitary distributive categories} as introduced therein; see Definition~\ref{def:doubly-infinitary-distributivity}. Consequently, by the structural analysis of Section~\ref{sec:pseudomonad-Dist}, $\Dist(\catC)$ is the \emph{free doubly-infinitary distributive category} generated by $\catC$.

We use the following standard inclusions, induced by pseudomonad units:
\[
\widehat{(-)}:\catC\to\Fam_{\Pi}(\catC),
\qquad
\underline{(-)}:\Fam_{\Pi}(\catC)\to\Dist(\catC),
\qquad
\overline{(-)}\defeq \underline{(\widehat{(-)})}:\catC\to\Dist(\catC).
\]
Concretely, $\widehat{C}$ is the singleton product $\prodfam{C}{\{\ast\}}$ and $\overline{C}$ is the singleton coproduct of that singleton product.

When convenient, we suppress the $\sumfam{-}{-}$ and $\prodfam{-}{-}$ delimiters and denote objects of $\Dist(\catC)$ by
\[
\sum_{j\in J}\prod_{i\in I_j} C_{j,i},
\]
always understood as a formal coproduct, indexed by $J$, of formal products, indexed by the sets $I_j$.

\section{\texorpdfstring{$\Dist(\catC)$}{Dist(C)} is cartesian closed}\label{sec:main-result}

We now prove the main theorem: for every category $\catC$, the concrete composite completion
\[
\Dist(\catC)=\Fam\bigl(\Fam(\catC^{op})^{op}\bigr)
\]
is cartesian closed. The proof is a direct analysis of the category of formal coproducts of formal products. After Section~\ref{sec:pseudomonad-Dist}, the same result may be read as saying that \textit{the free doubly-infinitary distributive category on $\catC$ is cartesian closed.}

The result follows from a multiexponentiation phenomenon in the free product completion $\Fam(\catC^{op})^{op}$, together with the fact that the outer $\Fam$-construction turns coproducts of representables into representable functors.

We begin by recalling the explicit objects, morphisms, products, and coproducts of $\Dist(\catC)$.

\subsection{Objects and morphisms}

An object of $\Dist(\catC)=\Fam(\Fam(\catC^{op})^{op})$ is a family in $\Fam_{\Pi}(\catC)$, hence may be denoted by
\[
\sumfam{\prodfam{C_{j,i}}{i\in I_j}}{j\in J}.
\]
Equivalently, it is a set $J$ together with, for each $j\in J$, a set $I_j$ and an $I_j$-indexed family $(C_{j,i})_{i\in I_j}$ of objects of $\catC$.

A morphism
\[
\sumfam{\prodfam{C_{j,i}}{i\in I_j}}{j\in J}
\longrightarrow
\sumfam{\prodfam{C'_{j',i'}}{i'\in I'_{j'}}}{j'\in J'}
\]
is, by unwinding the two $\Fam$-constructions, the following data:
\begin{enumerate}
\item a function $g:J\to J'$;
\item for each $j\in J$, a function $f_j:I'_{g(j)}\to I_j$;
\item for each $j\in J$ and each $i'\in I'_{g(j)}$, a morphism in $\catC$
\[
\alpha^{j}_{i'}:\ C_{j,f_j(i')}\longrightarrow C'_{g(j),i'}.
\]
\end{enumerate}
We may therefore denote such a morphism by a triple
\begin{equation}\label{eq:morphism-FamFamop}
(g,(f_j)_{j\in J},(\alpha^{j}_{i'})_{j,i'}).
\end{equation}
The identity on $\sumfam{\prodfam{C_{j,i}}{i\in I_j}}{j\in J}$ is
\[
\bigl(\id_J,(\id_{I_j})_{j\in J},(\id_{C_{j,i}})_{j\in J,\ i\in I_j}\bigr).
\]
Given also a morphism
\[
(h,(k_{j'})_{j'\in J'},(\beta^{j'}_{i''})_{j',i''}):
\sumfam{\prodfam{C'_{j',i'}}{i'\in I'_{j'}}}{j'\in J'}
\to
\sumfam{\prodfam{C''_{j'',i''}}{i''\in I''_{j''}}}{j''\in J''},
\]
their composite is
\[
\bigl(h\circ g,\ (f_j\circ k_{g(j)})_{j\in J},\ (\beta^{g(j)}_{i''}\circ \alpha^{j}_{k_{g(j)}(i'')})_{j\in J,\ i''\in I''_{h(g(j))}}\bigr),
\]
with composition in $\catC$ taken componentwise.

In particular, the hom-set admits the explicit form
\begin{align*}
&\Dist(\catC)\Bigl(
\sumfam{\prodfam{C_{j,i}}{i\in I_j}}{j\in J},\
\sumfam{\prodfam{C'_{j',i'}}{i'\in I'_{j'}}}{j'\in J'}
\Bigr)
\\
&\qquad\cong\
\coprod_{g\in \Set(J,J')}
\ \prod_{j\in J}
\ \coprod_{f_j\in \Set(I'_{g(j)}, I_j)}
\ \prod_{i'\in I'_{g(j)}}
\ \catC\bigl(C_{j,f_j(i')},\,C'_{g(j),i'}\bigr).
\end{align*}

\subsection{Products and coproducts}

Since $\Dist(\catC)=\Fam(\catD)$ for $\catD=\Fam(\catC^{op})^{op}$, coproducts are adjoined freely and are computed by disjoint union of the outer index sets.

\begin{lemmma}[Coproducts in $\Dist(\catC)$]\label{lem:coproducts}
Let $(A_k)_{k\in K}$ be a family of objects of $\Dist(\catC)$, with
\[
A_k=\sumfam{\prodfam{\,{}^{k}C_{j,i}\,}{i\in {}^{k}I_j}}{j\in {}^{k}J}.
\]
Then the coproduct exists and is given by
\[
\coprod_{k\in K} A_k
\ \cong\
\sumfam{\prodfam{\,{}^{k}C_{j,i}\,}{i\in {}^{k}I_j}}{(k,j)\in \coprod_{k\in K} {}^{k}J}.
\]
\end{lemmma}

\begin{proof}
This is the standard coproduct formula in $\Fam(\catD)$, applied with $\catD=\Fam(\catC^{op})^{op}$.
\end{proof}

Products in $\Fam(\catD)$ exist whenever $\catD$ has products (see, e.g.,~\cite{zbMATH05306928, zbMATH07186728, nunes2023chad}). Since $\catD=\Fam(\catC^{op})^{op}$ is the free completion under small products, it has small products, and hence so does $\Dist(\catC)$.

\begin{lemmma}[Products in $\Dist(\catC)$]\label{lem:products}
Let $(A_k)_{k\in K}$ be a family of objects of $\Dist(\catC)$, with
\[
A_k=\sumfam{\prodfam{\,{}^{k}C_{j,i}\,}{i\in {}^{k}I_j}}{j\in {}^{k}J}.
\]
Then the product exists and is given by
\[
\prod_{k \in K} A_k
\ \cong\
\sumfam{
\prodfam{\,{}^{k}C_{j_k,i}\,}{(k,i)\in \coprod_{k\in K} {}^{k}I_{j_k}}
}{
(j_k)_{k\in K}\in \prod_{k\in K} {}^{k}J
}.
\]
\end{lemmma}

\begin{proof}
In $\Fam(\catD)$ one has
\[
\prod_{k\in K}\sumfam{D^{k}_{j}}{j\in {}^{k}J}
\ \cong\
\sumfam{\prod_{k\in K} D^{k}_{j_k}}{(j_k)_{k\in K}\in \prod_{k\in K}{}^{k}J},
\]
with products computed in $\catD$.
Here $\catD=\Fam(\catC^{op})^{op}$, and products in $\catD$ are concatenations of index sets:
\[
\prod_{k\in K}\prodfam{\,{}^{k}C_{j_k,i}\,}{i\in {}^{k}I_{j_k}}
\ \cong\
\prodfam{\,{}^{k}C_{j_k,i}\,}{(k,i)\in\coprod_{k\in K}{}^{k}I_{j_k}}.
\]
Substituting this into the product formula in $\Fam(\catD)$ yields the stated expression.
\end{proof}

\subsection{Multiexponentiation and exponentials}

Although $\Dist(\catC)$ is constructed by freely adjoining products and coproducts, it also turns out to be cartesian closed. The proof is cleanest when expressed via a notion of ``familial representability''.

\subsubsection*{Exponentiating, multiexponentiating and coconnected objects}

Let $\catD$ be a category with small products. For $Y,Z\in\catD$, consider the presheaf
\[
\catD(-\times Y,Z):\catD^{op}\to\Set.
\]
We say that $\catD(-\times Y,Z)$ is familially representable if it is isomorphic to a small coproduct of representables.

\begin{definition}
	Let $Z$ be an object of $\catD$.
\begin{itemize}[label=--]
\item 	We call $Z$ \emph{exponentiating} if $\catD(-\times Y,Z)$ is representable for all $Y$.
\item 	We call $Z$ \emph{multiexponentiating} if, for all $Y$, the presheaf $\catD(-\times Y,Z)$ is familially representable.
\end{itemize} 	
\end{definition}

 When every object of $\catD$ is multiexponentiating (equivalently, for all pairs $Y,Z$), this is the \emph{cartesian multi-closed} condition of Ad\'amek--Rosick\'y~\cite[Definition~2.8]{zbMATH07186728}.

We also isolate a strong ``atomicity'' condition.

\begin{definition}\label{def:coconnected}
Let $\catD$ have small products. An object $Z\in\catD$ is \emph{coconnected} if the representable presheaf
\[
\catD(-,Z):\catD^{op}\to\Set
\]
preserves small coproducts in $\catD^{op}$, equivalently sends small products in $\catD$ to coproducts in $\Set$.
\end{definition}

\begin{lemmma}\label{lem:coindecomp-multiexp}
If $Z\in\catD$ is coconnected, then $Z$ is multiexponentiating.
\end{lemmma}

\begin{proof}
Fix $Y\in\catD$ and write $\terminal$ for the terminal object of $\catD$. Coconnectedness gives, for each $X\in\catD$, a canonical bijection
\[
\catD(X\times Y,Z)\ \cong\ \catD(X,Z)\ \sqcup\ \catD(Y,Z),
\]
natural in $X$, obtained from preservation of binary coproducts by $\catD(-,Z):\catD^{op}\to \Set$.
Viewed as presheaves in $X$, the second summand is the constant presheaf at the set $\catD(Y,Z)$.
Since $\catD(X,\terminal)\cong 1$ for all $X$, that constant presheaf is (canonically) a coproduct of representables:
\[
\catD(Y,Z)\ \cong\ \coprod_{f\in \catD(Y,Z)} \catD(-,\terminal).
\]
Hence
\[
\catD(-\times Y,Z)\ \cong\ \catD(-,Z)\ \sqcup\ \coprod_{f\in \catD(Y,Z)} \catD(-,\terminal),
\]
a coproduct of representables.
\end{proof}

\begin{lemmma}\label{lem:multiexp-closed-products}
Multiexponentiating objects in a category with small products are closed under small products.
\end{lemmma}

\begin{proof}
Let $(Z_k)_{k\in K}$ be multiexponentiating and fix $Y$. Choose presentations
\[
\catD(-\times Y,Z_k)\ \cong\ \coprod_{i\in I_k}\catD(-,R_{k i})
\qquad(k\in K)
\]
as coproducts of representables.
Then, using that products of representables are representable and that products distribute over coproducts in $\Set$,
\begin{align*}
\catD\!\left(-\times Y, \prod_{k\in K} Z_k\right)
&\cong \prod_{k\in K}\catD(-\times Y,Z_k)\\
&\cong \prod_{k\in K}\coprod_{i\in I_k}\catD(-,R_{k i})\\
&\cong \coprod_{(i_k)_{k\in K}\in \prod_{k\in K}I_k}\ \catD\!\left(-,\prod_{k\in K}R_{k,i_k}\right),
\end{align*}
which is again a small coproduct of representables.
\end{proof}

\subsubsection*{From multiexponentials in $\catD$ to exponentials in $\Fam(\catD)$}

The point of multiexponentiation is that $\Fam(\catD)$ turns coproducts of representables into representables.

\begin{lemmma}\label{lem:Fam-turns-multiexp-into-exp}
Let $\catD$ be a category with binary products and let $\eta:\catD\to\Fam(\catD)$ be the unit. For $Y,Z\in\catD$, the following are equivalent:
\begin{enumerate}
\item\label{it:multiexp}
there exist a set $K$ and objects $(R_k)_{k\in K}$ of $\catD$ with a natural isomorphism
\[
\catD(-\times Y,Z)\ \cong\ \coprod_{k\in K}\catD(-,R_k)
\qquad\text{in }[\catD^{op},\Set];
\]
\item\label{it:singleton-exp}
the exponential $\eta(Y)\Rightarrow \eta(Z)$ exists in $\Fam(\catD)$ and is represented by the family
\[
\sumfam{R_k}{k\in K}.
\]
\end{enumerate}
\end{lemmma}

\begin{proof}
\emph{\ref{it:multiexp}$\Rightarrow$\ref{it:singleton-exp}.}
Let $X=\sumfam{X_i}{i\in I}$ be an object of $\Fam(\catD)$. Since $\eta(Y)$ has singleton index set,
\[
X\times \eta(Y)\ \cong\ \sumfam{X_i\times Y}{i\in I}.
\]
Hence
\[
\Fam(\catD)\bigl(X\times \eta(Y),\,\eta(Z)\bigr)
\ \cong\
\prod_{i\in I}\catD(X_i\times Y,Z).
\]
Using the chosen presentation and distributing products over coproducts in $\Set$,
\begin{align*}
\prod_{i\in I}\catD(X_i\times Y,Z)
&\cong \prod_{i\in I}\ \coprod_{k\in K}\catD(X_i,R_k)\\
&\cong \coprod_{f:I\to K}\ \prod_{i\in I}\catD\bigl(X_i,R_{f(i)}\bigr).
\end{align*}
The right-hand side is, by definition of morphisms in $\Fam(\catD)$,
\[
\Fam(\catD)\Bigl(\sumfam{X_i}{i\in I},\,\sumfam{R_k}{k\in K}\Bigr).
\]
The composite bijection is natural in $X$, so $\sumfam{R_k}{k\in K}$ represents $\eta(Y)\Rightarrow\eta(Z)$.

\smallskip
\emph{\ref{it:singleton-exp}$\Rightarrow$\ref{it:multiexp}.}
Assume $\eta(Y)\Rightarrow \eta(Z)$ exists in $\Fam(\catD)$ and is represented by $E=\sumfam{R_k}{k\in K}$.
For any $X\in\catD$, using full faithfulness of $\eta$ and that $\eta$ preserves products,
\begin{align*}
\catD(X\times Y,Z)
&\cong \Fam(\catD)\bigl(\eta(X\times Y),\,\eta(Z)\bigr)\\
&\cong \Fam(\catD)\bigl(\eta(X)\times \eta(Y),\,\eta(Z)\bigr)\\
&\cong \Fam(\catD)\bigl(\eta(X),\,E\bigr)\\
&\cong \coprod_{k\in K}\catD(X,R_k).
\end{align*}
Naturality in $X$ gives the desired coproduct-of-representables presentation.
\end{proof}

The following criterion is essentially due to Ad\'amek--Rosick\'y~\cite[Theorem~2.11]{zbMATH07186728}, where it is phrased in terms of cartesian multi-closedness. We include the proof, together with the explicit exponential formula, for completeness.

\begin{corollary}[Cartesian closedness criterion for $\Fam$]\label{cor:Fam-ccc-criterion}
	Let $\catD$ be a category with small products. The following are equivalent:
	\begin{enumerate}
		\item $\Fam(\catD)$ is cartesian closed;
		\item every object of $\catD$ is multiexponentiating, i.e.\ for all $A,B\in\catD$ the presheaf $\catD(-\times A,B)$ is a small coproduct of representables.
	\end{enumerate}
	Moreover, assuming these conditions and choosing for each pair $(i,j)$ a presentation
	\[
	\catD(-\times A_i,B_j)\ \cong\ \coprod_{k\in K_{ij}}\catD(-,R_{ij,k}),
	\]
	the exponential of $A=\sumfam{A_i}{i\in I}$ and $B=\sumfam{B_j}{j\in J}$ in $\Fam(\catD)$ is represented by
	\[
	A\Rightarrow B\ \cong\
	\sumfam{
		\prod_{i\in I} R_{i,\,f(i),\,k_i}
	}{
		(f,(k_i))\in
		\coprod_{f\in \Set(I,J)}\ \prod_{i\in I} K_{i,\,f(i)}
	},
	\]
	where $R_{i,f(i),k_i}$ abbreviates $R_{\,i,\,f(i),\,k_i}$.
\end{corollary}

\begin{proof}
Assume every object of $\catD$ is multiexponentiating and fix presentations as in the statement. Define
\[
E\defeq
\sumfam{
\prod_{i\in I} R_{i,\,f(i),\,k_i}
}{
(f,(k_i))\in
\coprod_{f\in \Set(I,J)}\ \prod_{i\in I} K_{i,\,f(i)}
}.
\]
Let $X=\sumfam{X_\ell}{\ell\in L}$ be any object of $\Fam(\catD)$. Using the product formula in $\Fam(\catD)$,
\[X\times A\cong \sumfam{X_\ell\times A_i}{(\ell,i)\in L\times I},\] so
\begin{align*}
\Fam(\catD)(X\times A,B)
&\cong
\coprod_{g\in \Set(L\times I,J)}\ \prod_{(\ell,i)\in L\times I}\catD\bigl(X_\ell\times A_i,\ B_{g(\ell,i)}\bigr)\\
&\cong
\coprod_{(f_\ell)_{\ell\in L}\in \prod_{\ell\in L}\Set(I,J)}\ \prod_{\ell\in L}\ \prod_{i\in I}\catD\bigl(X_\ell\times A_i,\ B_{f_\ell(i)}\bigr).
\end{align*}
For fixed $\ell$, apply the chosen presentation and distribute products over coproducts in $\Set$:
\begin{align*}
\prod_{i\in I}\catD\bigl(X_\ell\times A_i,\ B_{f_\ell(i)}\bigr)
&\cong
\prod_{i\in I}\ \coprod_{k\in K_{i,f_\ell(i)}}\catD\bigl(X_\ell,\ R_{i,f_\ell(i),k}\bigr)\\
&\cong
\coprod_{(k_i)_{i\in I}\in \prod_{i\in I}K_{i,f_\ell(i)}}\ \catD\Bigl(X_\ell,\ \prod_{i\in I}R_{i,f_\ell(i),k_i}\Bigr).
\end{align*}
Substituting back and distributing the remaining product over coproducts in $\Set$ yields a natural bijection
\[
\Fam(\catD)(X\times A,B)\ \cong\ \Fam(\catD)(X,E),
\]
so $E$ represents $A\Rightarrow B$ and $\Fam(\catD)$ is cartesian closed.

Conversely, if $\Fam(\catD)$ is cartesian closed and $A,B\in\catD$, then $\eta(A)\Rightarrow \eta(B)$ exists in $\Fam(\catD)$; applying Lemma~\ref{lem:Fam-turns-multiexp-into-exp} (\ref{it:singleton-exp}$\Rightarrow$\ref{it:multiexp}) gives a coproduct-of-representables presentation of $\catD(-\times A,B)$. Hence every object of $\catD$ is multiexponentiating.
\end{proof}

\begin{remark}[Multiexponentiation via generic maps]\label{rem:diers}
Let $\catD$ have small products and fix $A,B\in\catD$. To give a natural isomorphism
\[
\catD(-\times A,B)\ \cong\ \coprod_{k\in K}\catD(-,R_k)
\]
is equivalent to giving a family of arrows $(\rho_k:R_k\times A\to B)_{k\in K}$ such that every $f:X\times A\to B$
factorises uniquely as
\[
f=\rho_k\circ(u\times \id_A)
\qquad
\text{for a unique }k\in K\text{ and a unique }u:X\to R_k.
\]
Finally, since multiexponentiating objects are closed under small products (Lemma~\ref{lem:multiexp-closed-products}),
it suffices to verify multiexponentiation against a set of generators under products.
\end{remark}

\subsection{Cartesian closedness of \texorpdfstring{$\Dist(\catC)$}{Dist(C)}}

We now apply the criterion above with $\catD\defeq \Fam_{\Pi}(\catC)=\Fam(\catC^{op})^{op}$. The crucial observation is that the image of $\catC$ inside $\catD$ consists of coconnected objects.

\begin{lemmma}\label{lem:base-objects-coindecomp}
For each $Z\in\catC$, its image $\widehat{Z}\in\catD=\Fam(\catC^{op})^{op}$ is coconnected.
\end{lemmma}

\begin{proof}
Let $\prodfam{X_i}{i\in I}$ be an object of $\catD$. A morphism in $\catD$ from $\prodfam{X_i}{i\in I}$ to $\widehat{Z}$ is, by definition of $\Fam_{\Pi}$, precisely a choice of an index $i\in I$ together with an arrow $X_i\to Z$ in $\catC$. Hence
\[
\catD\!\left(\prodfam{X_i}{i\in I},\widehat{Z}\right)
\ \cong\
\coprod_{i\in I}\catC(X_i,Z),
\]
naturally in the family $(X_i)_{i\in I}$. This is exactly preservation of products by $\catD(-,\widehat{Z})$.
\end{proof}

By Lemma~\ref{lem:coindecomp-multiexp}, every $\widehat{Z}$ is multiexponentiating; by Lemma~\ref{lem:multiexp-closed-products}, every object of $\catD$, being a product of such $\widehat{Z}$'s, is multiexponentiating. Corollary~\ref{cor:Fam-ccc-criterion} therefore implies that $\Fam(\catD)=\Dist(\catC)$ is cartesian closed. Unwinding the presentations yields the explicit formula below.

\begin{theorem}\label{thm:ccc}
The category $\Dist(\catC)$ is cartesian closed.

More explicitly, denote objects of $\Dist(\catC)$ by formal coproducts of formal products:
\[
A=\sum_{j\in J}\prod_{i\in I_j} C_{j i},
\qquad
B=\sum_{j'\in J'}\prod_{i'\in I'_{j'}} C'_{j' i'}.
\]
For each $j\in J$, $j'\in J'$ and $i'\in I'_{j'}$, define the set of \emph{partial maps}
\[
\Par(j;\,j',i')\ \defeq\
\{\ast\}\ \sqcup\ \coprod_{i\in I_j} \catC\!\left(C_{j i},\,C'_{j' i'}\right).
\]
For $r\in \Par(j;\,j',i')$, define an object $E^{j',i'}(r)\in \Fam_{\Pi}(\catC)$ by
\[
E^{j',i'}(r)\defeq
\begin{cases}
\widehat{C'_{j' i'}} & r=\ast,\\[0.2em]
\terminal_{\Fam_{\Pi}(\catC)} & r\neq \ast.
\end{cases}
\]
Then the exponential $A\Rightarrow B$ is represented by the family
\begin{equation}\label{eq:exponential-formula}
\sumfam{
\prod_{(j,i')\in \coprod_{j\in J}I'_{g(j)}} E^{g(j),i'}(r_{j,i'})
}{
\bigl(g,(r_{j,i'})\bigr)\in
\coprod_{g\in \Set(J,J')}\ \prod_{j\in J}\ \prod_{i'\in I'_{g(j)}} \Par\!\left(j;\,g(j),i'\right)
},
\end{equation}
where the product is taken in $\Fam_{\Pi}(\catC)$.
\end{theorem}

\begin{proof}
Let $\catD=\Fam_{\Pi}(\catC)$, so $\Dist(\catC)=\Fam(\catD)$.

\smallskip
\noindent\emph{Step 1: presentations in $\catD$.}
Fix $j\in J$, $j'\in J'$ and $i'\in I'_{j'}$. The object $\widehat{C'_{j'i'}}\in\catD$ is coconnected by Lemma~\ref{lem:base-objects-coindecomp}, hence multiexponentiating by Lemma~\ref{lem:coindecomp-multiexp}. Applying the proof of Lemma~\ref{lem:coindecomp-multiexp} with $Y=\prod_{i\in I_j}\widehat{C_{j i}}$ and $Z=\widehat{C'_{j' i'}}$ yields a natural isomorphism of presheaves
\[
\catD\!\left(-\times \prod_{i\in I_j}\widehat{C_{j i}},\,\widehat{C'_{j' i'}}\right)
\ \cong\
\coprod_{r\in \Par(j;\,j',i')}\ \catD\bigl(-,\,E^{j',i'}(r)\bigr),
\]
where the indexing set is
\[
\{\ast\}\ \sqcup\
\catD\!\left(\prod_{i\in I_j}\widehat{C_{j i}},\,\widehat{C'_{j' i'}}\right)
\ \cong\
\{\ast\}\ \sqcup\
\coprod_{i\in I_j}\catC(C_{j i},C'_{j' i'})
\]
by the hom-formula in $\Fam_{\Pi}(\catC)$.
This matches the definition of $\Par(j;\,j',i')$ and $E^{j',i'}$ above: the $\ast$ summand corresponds to the representable $\catD(-,\widehat{C'_{j' i'}})$, while the remaining summands are represented by the terminal object of $\catD$.

\smallskip
\noindent\emph{Step 2: products in the codomain.}
Since multiexponentiating objects are closed under products (Lemma~\ref{lem:multiexp-closed-products}), the object
\[
\prod_{i'\in I'_{j'}}\widehat{C'_{j'i'}}\ \in\catD
\]
is multiexponentiating. Expanding the product of coproducts in $\Set$ yields a presentation
\[
\catD\!\left(-\times \prod_{i\in I_j}\widehat{C_{j i}},\,\prod_{i'\in I'_{j'}}\widehat{C'_{j' i'}}\right)
\ \cong\
\coprod_{(r_{i'})_{i'\in I'_{j'}}\in \prod_{i'\in I'_{j'}}\Par(j;\,j',i')}
\catD\!\left(-,\prod_{i'\in I'_{j'}}E^{j',i'}(r_{i'})\right),
\]
with products on the right taken in $\catD=\Fam_{\Pi}(\catC)$.

\smallskip
\noindent\emph{Step 3: passage to $\Fam(\catD)$.}
Now apply Corollary~\ref{cor:Fam-ccc-criterion} to $\Fam(\catD)=\Dist(\catC)$, with
\[
A=\sum_{j\in J}A_j,\quad A_j=\prod_{i\in I_j}\widehat{C_{j i}},
\qquad
B=\sum_{j'\in J'}B_{j'},\quad B_{j'}=\prod_{i'\in I'_{j'}}\widehat{C'_{j' i'}}.
\]
The corollary expresses $A\Rightarrow B$ as a coproduct indexed by a choice function $g:J\to J'$ together with, for each $j$, a choice of an index in the multiexponentiation presentation of $\catD(-\times A_j,B_{g(j)})$. Substituting the presentations from Steps~1--2 and rewriting the resulting indices gives exactly the formula~\eqref{eq:exponential-formula}.
\end{proof}

\begin{remark}[Why ``partial maps''?]\label{rem:partial-maps}
The indexing in~\eqref{eq:exponential-formula} arises by repeatedly distributing products over coproducts in $\Set$.
The function $g:J\to J'$ records, for each coproduct summand of $A$, which coproduct summand of $B$ it targets.
For fixed $j$ and $i'\in I'_{g(j)}$, the choice
\[
r_{j,i'}\in \Par(j;\,g(j),i')
=\{\ast\}\ \sqcup\ \coprod_{i\in I_j}\catC(C_{j i},C'_{g(j),i'})
\]
records whether the $i'$-th coordinate is ``left free'' (the case $r_{j,i'}=\ast$) or is supplied by a specific arrow out of one of the factors of the product $\prod_{i\in I_j}C_{j i}$ (the case $r_{j,i'}\neq \ast$). In the former case, $\widehat{C'_{g(j),i'}}$ remains as a factor of the representing product; in the latter case, that factor is replaced by the terminal object of $\Fam_{\Pi}(\catC)$.
\end{remark}

\begin{remark}[Inductive reconstruction of exponentials]\label{rem:Inductive-definition}
Every object of $\Dist(\catC)$ is canonically a coproduct of formal products of objects of $\catC$. Exponentials can therefore be reconstructed from the following identities:
\begin{itemize}
\item If $A$ lies in the image of $\Fam_{\Pi}(\catC)\to\Dist(\catC)$ and $B$ lies in the image of $\catC\to\Dist(\catC)$, then
\[
A\Rightarrow B \;\cong\; B\ \sqcup\ \coprod_{f\in \Dist(\catC)(A,B)} \terminal,
\]
the special case of Theorem~\ref{thm:ccc} with one target coordinate.
\item If $A$ lies in the image of $\Fam_{\Pi}(\catC)\to\Dist(\catC)$ and $(B_k)_{k\in K}$ is any family in $\Dist(\catC)$, then
\[
A\Rightarrow \Bigl(\coprod_{k\in K} B_k\Bigr) \;\cong\; \coprod_{k\in K} (A\Rightarrow B_k).
\]
\item For any family $(A_j)_{j\in J}$,
\[
\Bigl(\coprod_{j\in J} A_j\Bigr)\Rightarrow B \;\cong\; \prod_{j\in J} (A_j\Rightarrow B).
\]
\item For any family $(B_i)_{i\in I}$,
\[
A\Rightarrow \Bigl(\prod_{i\in I} B_i\Bigr) \;\cong\; \prod_{i\in I} (A\Rightarrow B_i).
\]
\end{itemize}
Unfolding these rules yields the explicit partial-map indexing of Theorem~\ref{thm:ccc}.
\end{remark}

\begin{remark}[Comparison with lattice-theoretic complete distributivity]
Completely distributive lattices are complete Heyting algebras, with exponentials $a\Rightarrow b=\bigvee\{a'\mid a\wedge a'\le b\}$. Theorem~\ref{thm:ccc} is qualitatively different: although $\Dist(\catC)$ is cartesian closed, doubly-infinitary distributive categories need not admit exponentials (Section~\ref{sec:examples}). Moreover, completely distributive lattices of the form $\Dist(\catC)$ are necessarily trivial, so this phenomenon is genuinely non-thin. Cartesian closedness (and even Grothendieck topos structure) can nevertheless be recovered under stronger distributivity hypotheses together with generator conditions; see~\cite{marmolejo2012completely}.
\end{remark}

\begin{remark}[Exponentials in free infinitary distributive categories]
The same multiexponentiation mechanism yields exponentials in free infinitary distributive categories in the expected restricted cases; for instance, in $\Fam(\FinFam(\catC^{op})^{op})$ exponentials exist whenever the exponent lies in the image of the corresponding free product completion.
\end{remark}

\section{Pseudomonadic structure and doubly-infinitary distributivity}\label{sec:pseudomonad-Dist}

This section gives the structural explanation for the notation $\Dist$. We first define \emph{doubly-infinitary distributive categories}. Although the condition is natural and fundamental, it does not seem to have been systematically isolated in the literature. Thus, beyond the main cartesian-closedness theorem for free doubly-infinitary distributive categories, part of the contribution of this paper is to bring this categorical structure into focus and to clarify its basic role.

After the basic definition, we recall the canonical pseudodistributive law
\begin{equation}\label{eq:pseudodistributive-law}
\Fam_{\Pi}\circ\Fam_{\Sigma}\Longrightarrow \Fam_{\Sigma}\circ\Fam_{\Pi},
\end{equation} 
which induces the composite pseudomonad $\Dist=\Fam_{\Sigma}\circ\Fam_{\Pi}$, and prove that its pseudoalgebras are precisely doubly-infinitary distributive categories. This section is not needed for the proof of Theorem~\ref{thm:ccc}.

\subsection{Doubly-infinitary distributive categories}

We now give the explicit definition of the \textit{doubly-infinitary distributivity} condition encoded by the composite pseudomonad. In the form stated below, this condition is precisely the pseudoalgebra condition for the composite pseudomonad obtained from \eqref{eq:pseudodistributive-law}; see Lemma~\ref{lem:dist-are-pseudoalgebras}.

\begin{definition}[Doubly-infinitary distributive category]\label{def:doubly-infinitary-distributivity}
Let $\catC$ be a category with small products and small coproducts. For any family of families $(C_{j i})_{j\in J,\ i\in I_j}$, there is a canonical morphism
\begin{equation}\label{eq:canonical-morphism-that-defines}
[\langle\iota_{r_j}\circ \pi_j\mid j\in J\rangle\mid r\in \prod_{j\in J} I_j]:
\left(\coprod_{r\in  \prod\limits_{j\in J} I_j} \prod_{j\in J} C_{\,j,\,r_j}\right)
\longrightarrow
\left(\prod_{j\in J} \coprod_{i\in I_j} C_{\,j i}\right),
\end{equation}
where $\pi_j$ are the product projections and $\iota_{r_j}$ the coproduct coprojections. We say that $\catC$ is \emph{doubly-infinitary distributive} if~\eqref{eq:canonical-morphism-that-defines} is invertible for every such family.
\end{definition}

\begin{remark}[Functorial formulation]\label{rem:concise-definition}
As for ordinary distributive categories, Definition~\ref{def:doubly-infinitary-distributivity} admits a concise reformulation. Let
\[
\coprod:\Fam(\catC)\to\catC
\]
be the coproduct functor, left adjoint to the unit $\eta^\Sigma_\catC$. Then $\catC$ is doubly-infinitary distributive if and only if it has small products and small coproducts and the functor
\begin{equation}\label{eq:coproduct-functor}
\coprod:\Fam(\catC)\to\catC
\end{equation}
preserves small products.
\end{remark}

\subsection{The distributive law}

The pseudomonads $\Fam_{\Pi}=\Fam(-^{op})^{op}$ and $\Fam_{\Sigma}=\Fam(-)$ on $\Cat$ admit a canonical pseudodistributive law
\[
\lambda:\ \Fam_{\Pi}\circ \Fam_{\Sigma}\ \Longrightarrow\ \Fam_{\Sigma}\circ \Fam_{\Pi},
\]
in the sense of~\cite[Definition~11.4]{marmolejo1999distributive}; see also~\cite{zbMATH02116176, zbMATH05256222, MR3491845, zbMATH06970806} for generalities. Concretely, for a category $\catC$, the component
\[
\lambda_\catC:\ \Fam\!\bigl(\Fam(\catC)^{op}\bigr)^{op}\ \longrightarrow\ \Fam\!\bigl(\Fam(\catC^{op})^{op}\bigr)
\]
acts on objects by distributing products over coproducts:
\[
\lambda_\catC\!\left(\prodfam{\sumfam{C_{j,i}}{i\in I_j}}{j\in J}\right)
\ =\
\sumfam{\prodfam{C_{j,r_j}}{j\in J}}{r\in \prod_{j\in J} I_j}.
\]
On morphisms one proceeds similarly by universal properties; the construction is determined uniquely up to coherent isomorphism by the free (co)product completions. Following~\cite[Theorem~7.1]{von2018polynomials} (with base $\mathbb{B}=\Set$), these data define a pseudodistributive law, which is essentially unique~\cite[Remark~34, Corollary~49]{walker2019distributive}.

Consequently, the composite $\Dist=\Fam_{\Sigma}\circ\Fam_{\Pi}$ inherits a pseudomonad structure on $\Cat$.

\subsection{Pseudoalgebras and doubly-infinitary distributivity}

The compatibility encoded by $\lambda$ is precisely the condition of Definition~\ref{def:doubly-infinitary-distributivity}. In particular, pseudoalgebras for $\Dist$ are exactly doubly-infinitary distributive categories.

\begin{lemmma}[$\Dist$-pseudoalgebras]\label{lem:dist-are-pseudoalgebras}
The $2$-category $\DistCat$ of $\Dist$-pseudoalgebras consists precisely of doubly-infinitary distributive categories, product- and coproduct-preserving functors, and natural transformations.
\end{lemmma}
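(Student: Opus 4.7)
The plan is to invoke the general correspondence between pseudoalgebras for a composite pseudomonad $\Dist{-} = \Fam{-}\circ(op\circ\Fam{-}\circ op)$ arising from a pseudodistributive law $\lambda:ST\to TS$, and pairs of compatible pseudoalgebra structures for $T=\Fam{-}$ and $S=op\circ\Fam{-}\circ op$ separately (as in Marmolejo's work cited in the paper). A key simplification comes from the fact that both $T$ and $S$ are lax/colax-idempotent (Kock--Z\"{o}berlein), so pseudoalgebra structures for each are mere properties of $\catC$, namely coproduct-completeness and product-completeness, respectively.

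Granted this, I would first argue that a $\Dist{-}$-pseudoalgebra structure on $\catC$ decomposes as (i) a $T$-pseudoalgebra structure, i.e.\ coproducts in $\catC$, (ii) an $S$-pseudoalgebra structure, i.e.\ products in $\catC$, and (iii) a compatibility condition stemming from $\lambda$. The standard formulation of this compatibility for a pseudodistributive law is that the $T$-structure map $\bigsqcup:\Fam{\catC}\to\catC$ is an $S$-pseudomorphism, i.e.\ a product-preserving functor. By Remark \ref{rem:concise-definition}, this is precisely the condition that $\catC$ is doubly-infinitary distributive.

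At the morphism level, pseudomorphisms of $\Dist{-}$-pseudoalgebras correspond, under the pseudodistributive law, to functors that are simultaneously $T$- and $S$-pseudomorphisms, i.e., functors preserving both coproducts and products. Pseudoalgebra 2-cells between such are simply natural transformations: for lax/colax-idempotent pseudomonads, every natural transformation between pseudomorphisms is automatically an algebra 2-cell, a standard Kock--Z\"{o}berlein fact. Combining these observations yields the claimed description of $\DistCat$.

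The main obstacle will be verifying that the compatibility condition encoded by $\lambda$ — whose components are the explicit maps $\lambda_\catC$ computed in Section \ref{sec:pseudomonad-Dist} — unpacks to the invertibility of the canonical morphism \eqref{eq:canonical-morphism-that-defines} of Definition \ref{def:doubly-infinitary-distributivity}. This should follow by taking a family $(C_{ij})_{(j,i)\in J\times I_j}$ viewed as an object of $\Fam{\Fam{\catC^{\op}}^{\op}}^{\op}$ (or, equivalently, of $\Fam{\Fam{\catC}^{\op}}^{\op}$), applying the product-preservation statement of $\bigsqcup:\Fam{\catC}\to\catC$, and recognising that the resulting comparison map agrees with the canonical map in the definition. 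Once this identification is made, the lemma follows from the general theory of pseudoalgebras for composite lax-idempotent pseudomonads.
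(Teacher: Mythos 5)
Your proposal is correct and follows essentially the same route as the paper: both decompose a $\Dist{-}$-pseudoalgebra, via the pseudodistributive law and lax-idempotence, into coproduct- and product-completeness plus the condition that the structure map $\bigsqcup:\Fam{\catC}\to\catC$ preserves products, and then identify that condition with invertibility of the canonical morphism \eqref{eq:canonical-morphism-that-defines} using the explicit construction of products in $\Fam{\catC}$. The only caution is that you should carry out that last identification directly (as you sketch in your final paragraph) rather than lean on Remark \ref{rem:concise-definition}, which itself defers to this lemma.
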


\begin{proof}
By construction, $\Dist$ is the composite pseudomonad $\Fam_{\Sigma}\circ\Fam_{\Pi}$ obtained from the pseudodistributive law $\lambda$. Thus a $\Dist$-pseudoalgebra is a category $\catC$ equipped with:
\begin{itemize}
\item a $\Fam_{\Pi}$-pseudoalgebra structure, i.e.\ small products;
\item a $\Fam_{\Sigma}$-pseudoalgebra structure, i.e.\ small coproducts;
\end{itemize}
together with the compatibility condition imposed by $\lambda$, namely that the coproduct structure is a morphism in $\ProdCat$. Concretely, this means that the coproduct functor~\eqref{eq:coproduct-functor} preserves products. Unwinding the product formula in $\Fam(\catC)$ shows that this is equivalent to invertibility of the canonical comparisons~\eqref{eq:canonical-morphism-that-defines}. This is exactly Definition~\ref{def:doubly-infinitary-distributivity}.
\end{proof}

In particular, $\Dist(\catC)$ is the \emph{free} doubly-infinitary distributive category on $\catC$: for any functor $F:\catC\to\catD$ into a doubly-infinitary distributive category $\catD$, there exists an essentially unique extension
\[
\widehat{F}:\Dist(\catC)\to\catD
\]
preserving both products and coproducts.

\begin{remark}
A convenient way to verify existence of $\lambda$ is to view it as a lifting of the free coproduct completion pseudomonad along the pseudomonadic forgetful $2$-functor $\ProdCat\to\Cat$ in the sense of~\cite{zbMATH02116176}. One obtains pseudomonadic biadjunctions
\begin{center}
\begin{tikzcd}
\Cat \arrow[rr, bend left] & \bot (\epsilon,\eta) & \ProdCat \arrow[ll, bend left]\arrow[rr, bend left, "Fam(-)"]
& \bot (\epsilon ', \eta ')
& \DistCat \arrow[ll, bend left]
\end{tikzcd}
\end{center}
and $\lambda$ is the coherence data making the lift well-defined.
\end{remark}
\section{Free doubly-infinitary distributive categories on coproducts of categories}\label{sec:discrete-case}

This section has two purposes. First, we compute explicitly how the free doubly-infinitary distributive completion behaves on coproducts of categories, and then specialize this computation to discrete categories. This gives a concrete family of examples and makes transparent the effect of the composite completion $\Dist$ on a set of generators. Second, with the presentation used here, the same calculation brings into view a bicategorical biproduct phenomenon in the $2$-category of categories with products. This latter viewpoint is not needed for our development, but it clarifies how the free product-completion part of $\Dist$ behaves on bicategorical coproducts of product-complete categories, and points to a broader form of bicategorical semi-additivity.

\subsection{A biproduct computation in \texorpdfstring{$\ProdCat$}{ProdCat}}

We use bicategorical limits and colimits throughout this subsection; see~\cite{zbMATH03680046, zbMATH04008629, MR3491845, zbMATH06881682}. The main observation is a bicategorical biproduct phenomenon: in the $2$-category $\ProdCat$ of categories with small products, product-preserving functors, and natural transformations, small bicategorical products also serve as bicategorical coproducts. Thus $\ProdCat$ has small bicategorical biproducts; in analogy with the one-dimensional case, one may say that $\ProdCat$ is \emph{infinitary bicategorically semi-additive}.

\begin{proposition}\label{prop:ProdCat-biproduct}
	Let $(A_i)_{i\in L}$ be a small family of categories with small products. Then the cartesian product
	\[
	\prod_{i\in L}A_i
	\]
	is both a bicategorical product and a bicategorical coproduct of the family $(A_i)_{i\in L}$ in $\ProdCat$.
\end{proposition}

\begin{proof}
	The forgetful $2$-functor $\ProdCat\to\Cat$ is pseudomonadic and hence creates bicategorical products. Therefore the cartesian product $\prod_{i\in L}A_i$, with products computed componentwise, is the bicategorical product of the family $(A_i)_{i\in L}$ in $\ProdCat$.
	
	It remains to describe the bicategorical coproduct structure. For each $t\in L$, define a product-preserving functor
	\[
	\kappa_t:A_t\longrightarrow \prod_{i\in L}A_i
	\]
	by setting
	\[
	(\kappa_t(x))_i=
	\begin{cases}
		x & \text{if } i=t,\\
		\terminal_{A_i} & \text{if } i\neq t,
	\end{cases}
	\]
	where $\terminal_{A_i}$ denotes the terminal object of $A_i$. These functors preserve products because products in $\prod_{i\in L}A_i$ are computed componentwise.
	
	For every category with small products $B$, the functors $\kappa_t$ induce a functor
	\begin{equation}\label{eq:equivalence-bicategorical-products}
		\ProdCat\!\left(\prod_{i\in L}A_i,B\right)
		\longrightarrow
		\prod_{i\in L}\ProdCat(A_i,B),
		\qquad
		H\longmapsto (H\circ\kappa_i)_{i\in L}.
	\end{equation}
	This functor is an equivalence. Indeed, given a family of product-preserving functors
	\[
	F_i:A_i\to B
	\qquad (i\in L),
	\]
	define
	\[
	F:\prod_{i\in L}A_i\longrightarrow B,
	\qquad
	F((a_i)_{i\in L})\defeq \prod_{i\in L}F_i(a_i).
	\]
	This functor preserves products, since each $F_i$ does and iterated products in $B$ are canonically isomorphic. Moreover,
	\[
	F\circ \kappa_t\cong F_t
	\]
	naturally in $t$. Conversely, if $H:\prod_{i\in L}A_i\to B$ preserves products, then every object $(a_i)_{i\in L}$ decomposes canonically as
	\[
	(a_i)_{i\in L}\cong \prod_{i\in L}\kappa_i(a_i)
	\]
	in $\prod_{i\in L}A_i$, and hence
	\[
	H((a_i)_{i\in L})
	\cong
	\prod_{i\in L}H(\kappa_i(a_i)).
	\]
	Thus $H$ is recovered, up to canonical isomorphism, from the family $(H\circ \kappa_i)_{i\in L}$. The same argument applies to natural transformations. Therefore~\eqref{eq:equivalence-bicategorical-products} is an equivalence of categories, and $\prod_{i\in L}A_i$ is also a bicategorical coproduct.
\end{proof}

Thus $\ProdCat$ has \emph{small bicategorical biproducts}: its bicategorical products and coproducts exist and agree up to equivalence. The finite version is obtained by the same argument. That is to say, the $2$-category $\mathbf{Fin}\ProdCat$ of categories with finite products, finite-product-preserving functors, and natural transformations has finite bicategorical biproducts.

The bicategorical biproducts described above are part of a more general phenomenon in two-dimensional category theory, although this perspective does not seem to have been emphasized systematically in the literature. In analogy with the one-dimensional relationship between biproducts and enrichment over commutative monoids, bicategorical semi-additivity is related to enrichment over suitable $2$-categories of symmetric monoidal categories. In the infinitary case, the relevant enrichment should involve symmetric infinitary monoidal categories. From this point of view, the existence of bicategorical products or coproducts, together with the appropriate enriched structure, should force the existence of suitable bicategorical biproducts. We do not develop this general theory here, and leave these observations for future work.

For the present paper, we only need the concrete computation above: the $2$-category $\ProdCat$ has small bicategorical biproducts. The same argument gives finite bicategorical biproducts in the $2$-category $\mathbf{Fin}\ProdCat$ of categories with finite products, finite-product-preserving functors, and natural transformations.

\subsection{The construction}

We now apply the preceding biproduct computation to the composite completion $\Dist=\Fam_{\Sigma}\circ\Fam_{\Pi}$. The point is that $\Fam_{\Pi}:\Cat\to\ProdCat$ is the free product-completion pseudofunctor, hence is left biadjoint to the forgetful $2$-functor $\ProdCat\to\Cat$. It therefore preserves bicategorical coproducts. Since bicategorical coproducts in $\ProdCat$ are represented by cartesian products, Proposition~\ref{prop:ProdCat-biproduct} gives the following description.

\begin{theorem}\label{theo:on-coproduct-free}
	Let $(\catC_\ell)_{\ell\in L}$ be a small family of categories. Then
	\[
	\Dist\!\left(\coprod_{\ell\in L}\catC_\ell\right)
	\simeq
	\Fam_{\Sigma}\!\left(\prod_{\ell\in L}\Fam_{\Pi}(\catC_\ell)\right).
	\]
	In particular, for two categories $\catA$ and $\catB$,
	\[
	\Dist(\catA\sqcup\catB)
	\simeq
	\Fam_{\Sigma}\!\bigl(\Fam_{\Pi}(\catA)\times \Fam_{\Pi}(\catB)\bigr).
	\]
\end{theorem}

\begin{proof}
	Since $\Fam_{\Pi}:\Cat\to\ProdCat$ is left biadjoint to the forgetful $2$-functor $\ProdCat\to\Cat$, it preserves bicategorical coproducts. Therefore
	\[
	\Fam_{\Pi}\!\left(\coprod_{\ell\in L}\catC_\ell\right)
	\simeq
	\coprod_{\ell\in L}\Fam_{\Pi}(\catC_\ell)
	\]
	in $\ProdCat$. By Proposition~\ref{prop:ProdCat-biproduct}, the bicategorical coproduct on the right is represented by the cartesian product
	\[
	\prod_{\ell\in L}\Fam_{\Pi}(\catC_\ell).
	\]
	Hence
	\[
	\Fam_{\Pi}\!\left(\coprod_{\ell\in L}\catC_\ell\right)
	\simeq
	\prod_{\ell\in L}\Fam_{\Pi}(\catC_\ell)
	\]
	as objects of $\ProdCat$. Forgetting to $\Cat$ and applying $\Fam_{\Sigma}$ gives
	\[
	\Dist\!\left(\coprod_{\ell\in L}\catC_\ell\right)
	=
	\Fam_{\Sigma}\!\left(\Fam_{\Pi}\!\left(\coprod_{\ell\in L}\catC_\ell\right)\right)
	\simeq
	\Fam_{\Sigma}\!\left(\prod_{\ell\in L}\Fam_{\Pi}(\catC_\ell)\right),
	\]
	as required. The binary case is the special case $L=\{0,1\}$.
\end{proof}

Using the result above, we now specialize the computation to discrete categories.

\begin{corollary}\label{theo:on-discrete-set-free}
	Let $A$ be a set, regarded as a discrete category. Then the free doubly-infinitary distributive category on $A$ is equivalent to
	\[
	\Dist(A)\simeq \Fam\!\left(\prod_{a\in A}\Set^{op}\right),
	\]
	where $\prod_{a\in A}\Set^{op}$ denotes the cartesian product of the $A$-indexed family of categories $(\Set^{op})_{a\in A}$.
\end{corollary}

\begin{proof}
	The free product completion of the terminal category is equivalent to $\Set^{op}$:
	\[
	\Fam_{\Pi}(\terminal)\simeq \Set^{op}.
	\]
	Since $A$ is a discrete category, it is the coproduct in $\Cat$ of $A$ copies of the terminal category:
	\[
	A\cong \coprod_{a\in A}\terminal.
	\]
	Applying Theorem~\ref{theo:on-coproduct-free}, we get
	\[
	\Dist(A)
	\simeq
	\Fam_{\Sigma}\!\left(\prod_{a\in A}\Fam_{\Pi}(\terminal)\right)
	\simeq
	\Fam_{\Sigma}\!\left(\prod_{a\in A}\Set^{op}\right).
	\]
	Since $\Fam_{\Sigma}=\Fam$, this is precisely
	\[
	\Dist(A)
	\simeq
	\Fam\!\left(\prod_{a\in A}\Set^{op}\right).
	\]
\end{proof}

We also spell out a direct proof, independent of the bicategorical biproduct computation. Since $A$ is discrete, an object of $\Fam(A)$ is precisely a family of elements of $A$ indexed by some set, equivalently a function into $A$. Thus
\[
\Fam(A)\simeq \Set/A.
\]
Taking fibres gives the standard equivalence
\[
\Set/A\simeq \prod_{a\in A}\Set.
\]
Since $A$ is discrete, $A^{op}=A$. Consequently,
\[
\Fam(A^{op})^{op}
\simeq
\Fam(A)^{op}
\simeq
(\Set/A)^{op}
\simeq
\left(\prod_{a\in A}\Set\right)^{op}
\simeq
\prod_{a\in A}\Set^{op}.
\]
Applying $\Fam(-)$ then gives
\[
\Dist(A)
=
\Fam\!\bigl(\Fam(A^{op})^{op}\bigr)
\simeq
\Fam\!\left(\prod_{a\in A}\Set^{op}\right),
\]
as before.

\section{Examples}\label{sec:examples}

This section collects examples and counterexamples separating doubly-infinitary distributivity from related conditions. Recall that every doubly-infinitary distributive category is infinitary distributive, and that every completely distributive category is doubly-infinitary distributive~\cite{marmolejo2012completely}.

\subsection{The \texorpdfstring{$\Fam$}{Fam} construction}

For any category $\catC$, the free coproduct completion $\Fam(\catC)$ is extensive, and it is lextensive whenever $\catC$ has finite limits. However, $\Fam(\catC)$ need not have small products, and hence need not be doubly-infinitary distributive. If $\catC$ already has small products, then the free coproduct completion takes place inside $\ProdCat$, and the resulting category is doubly-infinitary distributive.

\begin{lemmma}\label{ex:famprod}
	If $\catC$ has small products, then $\Fam(\catC)$ is doubly-infinitary distributive.
\end{lemmma}

\begin{proof}
	Regard $\catC$ as an object of $\ProdCat$. By Section~\ref{sec:pseudomonad-Dist}, the lifted free coproduct-completion pseudofunctor
	\[
	\Fam:\ProdCat\to\DistCat
	\]
	is left biadjoint to the forgetful pseudofunctor
	\[
	\DistCat\to\ProdCat.
	\]
	Therefore $\Fam(\catC)$ carries the structure of a doubly-infinitary distributive category.
\end{proof}

\begin{example}
	The category of sets is doubly-infinitary distributive. Indeed,
	\[
	\Set\simeq \Fam(\terminal)\simeq \Dist(\initial),
	\]
	where $\terminal$ is the terminal category and $\initial$ is the initial category. Thus $\Set$ is the free $\Dist$-algebra on $\initial$, and hence the initial object of $\DistCat$.
\end{example}

\begin{example}[Polynomials/containers]
	The category $\Fam(\Set^{op})=\Dist(\terminal)$ is the free doubly-infinitary distributive category on $\terminal$. It is commonly studied as the category of set-based polynomials, or containers~\cite{abbott2003categories}. Theorem~\ref{thm:ccc} recovers its cartesian closedness, as observed in~\cite{altenkirch2010higher}.
\end{example}

The preceding lemma also gives many less free examples. For instance, since $\Top$ has small products, the category $\Fam(\Top)$ is doubly-infinitary distributive. We will use this observation below to obtain examples of doubly-infinitary distributive categories that are not cartesian closed.

\subsection{A transfer lemma}

We record the following elementary transfer principle, which gives a convenient way of producing examples by comparison with categories where products and coproducts are easier to compute.

\begin{lemmma}[Transfer of doubly-infinitary distributivity]\label{lem:conservative-transfer}
	Let $\catC$ and $\catE$ be categories with small products and small coproducts, and let
	\[
	U:\catC\to\catE
	\]
	be a functor that creates small products and small coproducts. If $\catE$ is doubly-infinitary distributive, then so is $\catC$.
\end{lemmma}

\begin{proof}
	Let $(C_{ji})_{j\in J,\ i\in I_j}$ be a family of objects of $\catC$, and let $d_{\catC}$ be the canonical comparison~\eqref{eq:canonical-morphism-that-defines} in $\catC$. Since $U$ creates the relevant products and coproducts, $U(d_{\catC})$ identifies with the corresponding canonical comparison for the family $(U(C_{ji}))_{j\in J,\ i\in I_j}$ in $\catE$. This comparison is invertible because $\catE$ is doubly-infinitary distributive. By creation of the products and coproducts appearing in the comparison, $d_{\catC}$ is invertible. Hence $\catC$ is doubly-infinitary distributive.
\end{proof}

\begin{example}[$M$-sets]
	Let $M$ be a monoid, and denote by $M\text{-}\Set$ the category of left $M$-sets and $M$-equivariant maps. The forgetful functor
	\[
	U:M\text{-}\Set\to\Set
	\]
	creates small products and small coproducts. Products carry the diagonal action,
	\[
	m\cdot (x_i)_{i\in I}\defeq (m\cdot x_i)_{i\in I},
	\]
	and coproducts carry the summandwise action,
	\[
	m\cdot \iota_i(x)\defeq \iota_i(m\cdot x).
	\]
	Since $\Set$ is doubly-infinitary distributive, Lemma~\ref{lem:conservative-transfer} implies that $M\text{-}\Set$ is doubly-infinitary distributive.
\end{example}

\begin{example}[Functor categories and presheaves]\label{ex:presheaves}
	Let $\catD$ be a small category. If $\catC$ is doubly-infinitary distributive, then so is the cartesian product
	\[
	\prod_{d\in \Ob(\catD)}\catC.
	\]
	Indeed, products of $\Dist$-pseudoalgebras are computed componentwise. Moreover, the evaluation functor
	\[
	E:\left[\catD^{op},\catC\right]\to \prod_{d\in \Ob(\catD)}\catC
	\]
	creates small products and small coproducts. Therefore, by Lemma~\ref{lem:conservative-transfer}, the functor category $\left[\catD^{op},\catC\right]$ is doubly-infinitary distributive.
	
	In particular, for any small category $\catD$, the presheaf category $[\catD^{op},\Set]$ is doubly-infinitary distributive. Taking $\catD$ to be the walking graph, the category $\mathbf{Graph}$ of graphs in $\Set$ is doubly-infinitary distributive.
\end{example}

\subsection{Posets}

A poset $\catC$ is a doubly-infinitary distributive category if and only if it is a completely distributive lattice; see, for instance,~\cite{zbMATH04136008}.

Moreover, only the trivial poset is extensive. Indeed, in a poset, parallel arrows are equal, so in the coproduct diagram
\[
a \xrightarrow{\iota_1} a\sqcup a \xleftarrow{\iota_2} a
\]
one has $\iota_1=\iota_2$. Disjointness of coproducts would force the pullback of $\iota_1$ and $\iota_2$ to be $\bot$, but that pullback is $a$ itself. Hence $a=\bot$ for every object $a$.

\medskip
\noindent\emph{Consequently, any non-trivial completely distributive lattice yields a doubly-infinitary distributive category that is not extensive.}

\subsection{Topological spaces}

The category $\Top$ of topological spaces is infinitary distributive in the classical sense: finite products distribute over small coproducts. It is, however, \emph{not} doubly-infinitary distributive.

Consider~\eqref{eq:canonical-morphism-that-defines} with $J=\NN$, $I_j=2$ for every $j\in\NN$, and $C_{ji}=1$, the one-point space, for every $j$ and $i$. This gives a continuous map
\[
d:\ \coprod_{r\in 2^{\NN}}\ \prod_{j\in\NN}1
\longrightarrow
\prod_{j\in\NN}\ \coprod_{i\in 2}1.
\]
Since $\prod_{j\in\NN}1\cong 1$, the domain is the coproduct $\coprod_{r\in 2^{\NN}}1$, namely the discrete space on the underlying set $2^{\NN}$. The codomain is the product $\prod_{j\in\NN}2$, namely the Cantor space $2^{\NN}$ with the product topology, which is compact by Tychonoff's theorem~\cite{zbMATH01002289}. The underlying function of $d$ is the identity on $2^{\NN}$, but $d$ is not an isomorphism in $\Top$: its inverse would be a continuous bijection from a compact space to an infinite discrete space, which is impossible since continuous images of compact spaces are compact and compact subsets of a discrete space are finite.

\medskip
\noindent\emph{Thus $\Top$ is infinitary distributive, indeed lextensive, but not doubly-infinitary distributive.}

\subsubsection{Pointfree topology}

A \emph{frame} is a complete lattice in which finite meets distribute over arbitrary joins; equivalently, it is an infinitary distributive poset when viewed as a category. The opposite category of frames is the category of locales,
\[
\Loc=\Frm^{op};
\]
see~\cite{zbMATH05898723}.

We first recall that $\Loc$ is extensive. This may be verified directly by proving the corresponding coextensivity property of $\Frm$; compare Taylor~\cite[Section~5.5]{taylor1999practical}. Let
\[
p:Z\to X\sqcup Y
\]
be a locale morphism, corresponding to a frame homomorphism
\[
p^*:\Omega(X)\times\Omega(Y)\to\Omega(Z).
\]
The elements $(1,0)$ and $(0,1)$ of the product frame $\Omega(X)\times\Omega(Y)$ are complementary. Hence their images
\[
u\defeq p^*(1,0),
\qquad
v\defeq p^*(0,1)
\]
are complementary elements of $\Omega(Z)$. These complementary opens decompose $Z$ as the coproduct of the corresponding open sublocales,
\[
Z\cong Z_u\sqcup Z_v.
\]
A direct calculation with the defining pullback squares identifies these open sublocales with
\[
Z\times_{X\sqcup Y}X
\qquad\text{and}\qquad
Z\times_{X\sqcup Y}Y.
\]
Thus one obtains the required extensivity isomorphism
\[
Z \cong
\bigl(Z\times_{X\sqcup Y}X\bigr)
\sqcup
\bigl(Z\times_{X\sqcup Y}Y\bigr).
\]

One may also view this as the localic shadow of the bicategorical extensivity of the $2$-category of Grothendieck toposes, established by Bunge and Lack~\cite{bunge2003van}. Under the correspondence sending a locale to its category of sheaves, the preceding decomposition is reflected by the corresponding bicategorical coproduct and pseudopullback decomposition for localic Grothendieck toposes. This gives a conceptual explanation of the extensivity of $\Loc$, while the frame-level argument above gives the elementary verification.

The Cantor-space construction above also shows that, despite being extensive, $\Loc$ is \emph{not} doubly-infinitary distributive. Indeed, the same comparison map is obtained from the discrete locale on the set $2^{\NN}$ to the localic Cantor space. Since both locales are spatial, an isomorphism between them in $\Loc$ would induce a homeomorphism between their spaces of points. This is impossible by the argument above. Hence $\Loc$ is extensive, but not doubly-infinitary distributive.

\subsection{Doubly-infinitary distributive categories that are not cartesian closed}

Theorem~\ref{thm:ccc} shows that \emph{free} doubly-infinitary distributive categories are cartesian closed. In general, however, doubly-infinitary distributivity does not imply cartesian closedness. We first record a useful criterion comparing cartesian closedness of $\catC$ with that of $\Fam(\catC)$ under ordinary infinitary distributivity.

\begin{theorem}\label{theo:cartesian-closed-infinitary-distributive-Fam}
	Let $\catC$ be an infinitary distributive category. Then $\catC$ is cartesian closed if and only if $\Fam(\catC)$ is cartesian closed.
\end{theorem}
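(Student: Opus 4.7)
My plan is to leverage the reflective full subcategory inclusion $\iota: \catC \hookrightarrow \Fam{\catC}$ sending $X$ to its singleton family $\sumfam{X}{* \in \terminal}$, with reflector $\bigsqcup : \Fam{\catC} \to \catC$ collapsing a family to its coproduct in $\catC$. A preliminary observation is that infinitary distributivity forces $\bigsqcup$ to preserve binary products: two applications of the distributive law give $\bigsqcup(\sumfam{A_i}{i}\times \sumfam{B_j}{j}) \cong (\bigsqcup_i A_i)\times (\bigsqcup_j B_j)$. The inclusion $\iota$ itself preserves finite products, being a right adjoint.

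For the $(\Leftarrow)$ direction, assume $\Fam{\catC}$ is cartesian closed. Finite products transport to $\catC$ via $\bigsqcup$. For $A, B \in \catC$, write the $\Fam{\catC}$-exponential as $[\iota A, \iota B] = \sumfam{E_l}{l \in L}$, yielding a natural isomorphism $\sum_{l \in L} \catC(C, E_l) \cong \catC(C\times A, B)$ in $C \in \catC$. I will show $L$ is a singleton: if distinct $l_0, l_1 \in L$ existed, instantiate the iso at $C \ceq E_{l_0}\sqcup E_{l_1}$. Infinitary distributivity splits the right-hand side as $\catC(E_{l_0}\times A, B)\times \catC(E_{l_1}\times A, B)$, which reexpands via the isomorphism at $C = E_{l_0}$ and $C = E_{l_1}$ into the double sum $\sum_{(l,l')}\catC(E_{l_0}, E_l)\times \catC(E_{l_1}, E_{l'})$. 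Comparing against the left-hand side $\sum_{l}\catC(E_{l_0},E_l)\times \catC(E_{l_1},E_l)$ forces every off-diagonal summand to be empty; in particular $\catC(E_{l_0}, E_{l_0})\times \catC(E_{l_1}, E_{l_1}) = \emptyset$, contradicting the presence of the identity morphisms. Hence $L$ is a singleton and $[A, B]_\catC$ exists in $\catC$.

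For the $(\Rightarrow)$ direction, assume $\catC$ is cartesian closed. I will define
\[
[\sumfam{A_i}{i\in I}, \sumfam{B_j}{j\in J}]_{\Fam{\catC}} \ceq \sumfam{\prod_{i\in I} [A_i, B_{f(i)}]_\catC}{f : I\to J}
\]
and verify the universal property by a chain of natural isomorphisms: unfold $\Fam{\catC}(\sumfam{C_k}{k}\times \sumfam{A_i}{i}, \sumfam{B_j}{j})$ as $\prod_{(k,i)}\sum_j \catC(C_k \times A_i, B_j)$, transport through the CCC structure of $\catC$ to $\prod_k\prod_i\sum_j \catC(C_k, [A_i, B_j]_\catC)$, apply the axiom of choice to swap $\prod_i \sum_j$ for $\sum_{f: I\to J}\prod_i$, and use representability of the $\catC$-products to identify the result with the hom-set mapping into the claimed family.

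The main obstacle is the existence of the products $\prod_{i\in I}[A_i, B_{f(i)}]_\catC$ in the $(\Rightarrow)$ direction: these are not automatically provided by cartesian closure plus infinitary distributivity, since a CCC with small coproducts need not admit every small product. I expect to build them by partitioning $I$ over the fibres of $f : I\to J$: on each fibre $f^{-1}(j)$ the target $B_{f(i)} = B_j$ is constant, and the standard CCC identity $\prod_{i \in f^{-1}(j)} [A_i, B_j] \cong [\bigsqcup_{i \in f^{-1}(j)} A_i, B_j]$ furnishes a genuine object of $\catC$; extracting the outer $J$-indexed product of these exponentials is the subtle step, and will likely require combining infinitary distributivity with the CCC structure (or reading the hypothesis of ``infinitary distributive cartesian closed'' as implicitly providing such indexed products of exponentials).
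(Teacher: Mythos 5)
Your $(\Rightarrow)$ direction contains the genuine gap, and you have located it yourself: the objects $\prod_{i\in I}[A_i,B_{f(i)}]_\catC$ that your candidate formula requires are simply not supplied by the hypotheses. Your fibrewise reduction does not close the gap: after collapsing each fibre you still need the $J$-indexed product $\prod_{j\in J}\bigl[\bigsqcup_{i\in f^{-1}(j)}A_i,\,B_j\bigr]_\catC$, and the image of $f$ is in general infinite; an infinitary distributive cartesian closed category need not possess such infinite products of exponentials, and nothing in your argument produces them or shows that the exponential $[\sumfam{A_i}{i\in I},\sumfam{B_j}{j\in J}]$ exists in some other guise. (The finitary analogue for $\FinFam{\catC}$ is unproblematic precisely because there these products are finite.) As written, this half is therefore not a proof. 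For what it is worth, the paper does not attempt a construction here either: it treats this implication as known and cites the literature, reserving its actual argument for the converse.

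Your $(\Leftarrow)$ direction is essentially correct but takes a genuinely different route from the paper's. The paper notes that $\bigsqcup\dashv\iota$ exhibits $\catC$ as a full reflective subcategory of $\Fam{\catC}$ whose reflector preserves finite products (by infinitary distributivity), and then invokes a standard result (Johnstone, Proposition~4.3.1) to conclude that $\catC$ is an exponential ideal of $\Fam{\catC}$, hence cartesian closed. You instead show by hand that $[\iota A,\iota B]=\sumfam{E_l}{l\in L}$ has singleton index set, which is the concrete content of the same fact. The one step you must tighten is ``comparing against the left-hand side forces every off-diagonal summand to be empty'': a mere bijection between $\sum_{l}\catC(E_{l_0},E_l)\times\catC(E_{l_1},E_l)$ and $\sum_{(l,l')}\catC(E_{l_0},E_l)\times\catC(E_{l_1},E_{l'})$ forces nothing about individual summands. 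You need that the isomorphism is natural in $C$, so that it commutes with restriction along the coprojections $E_{l_0}\to E_{l_0}\sqcup E_{l_1}$; then the composite map into the double sum is exactly the restriction-pairing map, whose image lies in the summands with $l=l'$, and surjectivity forces the off-diagonal summands (in particular the one at $(l_0,l_1)$, which contains $(\id,\id)$) to be empty. With that repair, and the observation that a singleton $L$ turns the natural isomorphism into $\catC(C,E_*)\cong\catC(C\times A,B)$, your argument for this direction goes through; it buys self-containedness at the cost of the generality and brevity of the exponential-ideal argument.
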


\begin{proof}
	If $\catC$ is cartesian closed, then so is $\Fam(\catC)$; see, for instance,~\cite{zbMATH07186728, LV24b}.
	
	Conversely, assume that $\Fam(\catC)$ is cartesian closed. The inclusion $\eta^\Sigma_\catC:\catC\to\Fam(\catC)$ is fully faithful and has left adjoint
	\[
	\coprod:\Fam(\catC)\to\catC.
	\]
	Infinitary distributivity of $\catC$ says precisely that $\coprod$ preserves finite products. By~\cite[Proposition~4.3.1]{johnstone2002sketches}, it follows that $\catC$ is an exponential ideal in $\Fam(\catC)$, and hence is cartesian closed.
\end{proof}

\begin{corollary}\label{coro:example-creator}
	Let $\catC$ be an infinitary distributive category with small products and small coproducts, and suppose that $\catC$ is not cartesian closed. Then:
	\begin{enumerate}
		\item $\Fam(\catC)$ is doubly-infinitary distributive;
		\item $\Fam(\catC)$ is not cartesian closed.
	\end{enumerate}
\end{corollary}

\begin{proof}
	This follows directly from Lemma~\ref{ex:famprod} and Theorem~\ref{theo:cartesian-closed-infinitary-distributive-Fam}.
\end{proof}

\begin{example}
	The category $\Top$ is not cartesian closed: for instance, product with the Sierpi\'nski space fails to preserve certain coequalisers~\cite[Proposition~7.1.2]{borceux1994handbook}. Since $\Top$ is infinitary distributive, Corollary~\ref{coro:example-creator} implies that $\Fam(\Top)$ is doubly-infinitary distributive but not cartesian closed.
\end{example}

\begin{example}\label{example:loc}
	Let $\ConTop$ be the category of connected locally connected topological spaces and continuous maps, and let $\LocConTop$ be the category of locally connected spaces. Every locally connected space is a coproduct of its connected components, and these components are connected and locally connected; hence
	\[
	\LocConTop\simeq \Fam(\ConTop)
	\]
	by~\cite[Proposition~6.15]{zbMATH05306928}. Since $\ConTop$ has small products, computed by the usual product topology, $\LocConTop$ is doubly-infinitary distributive by Lemma~\ref{ex:famprod}.
	
	\smallskip
	\noindent\emph{Despite being doubly-infinitary distributive, $\LocConTop$ is not cartesian closed.}
\end{example}

The preceding example admits a conceptual generalisation using connected objects. Recall that, in a category $\catC$ with small coproducts, an object $C$ is called \emph{connected} if the representable functor
\[
\catC(C,-):\catC\to\Set
\]
preserves small coproducts. We denote by $\mathsf{Con}\catC$ the full subcategory of $\catC$ spanned by the connected objects.

\begin{lemmma}\label{lem:in-terms-of-connected}
	Let $\catC$ be an extensive category with small coproducts. Suppose that $\mathsf{Con}\catC$ has small products and that every object of $\catC$ is isomorphic to a coproduct of connected objects. Then $\catC$ is doubly-infinitary distributive.
\end{lemmma}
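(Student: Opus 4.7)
The plan is to reduce this lemma to Example \ref{ex:famprod} by establishing an equivalence of categories
\[
\catC \simeq \Fam{\mathsf{Con}\catC}.
\]
Once this is in place, since $\mathsf{Con}\catC$ has products by hypothesis, Example \ref{ex:famprod} gives doubly-infinitary distributivity of $\Fam{\mathsf{Con}\catC}$, and doubly-infinitary distributivity is invariant under equivalence of categories (being a property, not extra structure, characterised via the coproduct functor as in Remark \ref{rem:concise-definition}).

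To build the equivalence, I would define a functor
\[
S : \Fam{\mathsf{Con}\catC} \to \catC, \qquad \sumfam{C_i}{i\in I}\ \mapsto\ \bigsqcup_{i\in I} C_i,
\]
which is well defined because $\catC$ has coproducts. Essential surjectivity of $S$ is immediate from the hypothesis that every object of $\catC$ is a coproduct of connected objects. For full-faithfulness, I would compute, for two objects $\sumfam{C_i}{i\in I},\sumfam{D_j}{j\in J}$ of $\Fam{\mathsf{Con}\catC}$, that
\[
\catC\!\left(\bigsqcup_{i\in I} C_i,\ \bigsqcup_{j\in J} D_j\right) \ \cong\ \prod_{i\in I}\catC\!\left(C_i,\ \bigsqcup_{j\in J} D_j\right) \ \cong\ \prod_{i\in I}\bigsqcup_{j\in J}\catC(C_i,D_j),
\]
where the first isomorphism is the universal property of the coproduct in the source, and the second isomorphism is exactly the definition of $C_i$ being connected, i.e.\ that $\catC(C_i,-)$ preserves coproducts. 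The rightmost expression is, by definition, $\Fam{\mathsf{Con}\catC}\bigl(\sumfam{C_i}{i\in I},\sumfam{D_j}{j\in J}\bigr)$, and one checks that the resulting bijection is precisely the action of $S$ on homsets.

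The main subtlety — and essentially the only nontrivial step — is verifying that the hom-set isomorphism above is induced by $S$ (and not merely a set-theoretic coincidence), which amounts to chasing the coprojections and using the coproduct-preservation property of $\catC(C_i,-)$ naturally. Everything else is routine: $\mathsf{Con}\catC$ has products by assumption, so Example \ref{ex:famprod} applies to $\Fam{\mathsf{Con}\catC}$, and transporting doubly-infinitary distributivity across the equivalence $S$ concludes the proof.
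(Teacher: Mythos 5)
Your proposal is correct and follows essentially the same route as the paper: the paper's proof simply cites \cite[Proposition~6.15]{zbMATH05306928} for the equivalence $\catC\simeq\Fam{\mathsf{Con}\catC}$ and then invokes Example \ref{ex:famprod}, whereas you additionally spell out the (standard) proof of that equivalence via the coproduct functor and connectedness.
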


\begin{proof}
	By extensivity and~\cite[Proposition~6.15]{zbMATH05306928}, one has
	\[
	\catC\simeq \Fam(\mathsf{Con}\catC).
	\]
	Since $\mathsf{Con}\catC$ has small products, Lemma~\ref{ex:famprod} applies.
\end{proof}

\subsection{Cartesian closed categories need not be doubly-infinitary distributive}

Any cartesian closed category with small coproducts is infinitary distributive, since $A\times-$ is a left adjoint. This does not extend to doubly-infinitary distributivity.

\begin{counterexample}
	The category $\FinSet$ of finite sets is cartesian closed, but not doubly-infinitary distributive, since it lacks infinite products and infinite coproducts.
\end{counterexample}

\begin{counterexample}[Pseudotopological spaces]
	Let $\PsTop$ be the category of pseudotopological spaces; see, for instance,~\cite{hofmann2014monoidal}. It is a complete and cocomplete quasitopos, hence cartesian closed.
	
	The inclusion $\Top\hookrightarrow\PsTop$ is fully faithful and creates the products and coproducts of diagrams of topological spaces. If $\PsTop$ were doubly-infinitary distributive, Lemma~\ref{lem:conservative-transfer} would imply that $\Top$ is doubly-infinitary distributive, contradicting the counterexample above. Therefore $\PsTop$ is cartesian closed, complete and cocomplete, but not doubly-infinitary distributive.
\end{counterexample}

\begin{counterexample}
	The category $\Qbs$ of quasi-Borel spaces~\cite{heunen2017convenient} provides another example.
	
	The category $\Qbs$ is a complete and cocomplete quasitopos and hence cartesian closed. It is of interest as a setting for probability theory, admitting a full embedding of standard Borel spaces that preserves limits and countable coproducts.
	
	Nevertheless, $\Qbs$ is not doubly-infinitary distributive; see the computation in the following remark.
\end{counterexample}

\begin{remark}[Details for quasi-Borel spaces]
	A quasi-Borel space $X$ consists of a set $|X|$ together with a set $M_X\subseteq \Set(\RR,|X|)$ satisfying:
	\begin{enumerate}
		\item $M_X$ contains all constant functions;
		\item $M_X$ is closed under precomposition with measurable functions $\RR\to\RR$;
		\item $M_X$ is closed under gluing along countable measurable partitions of $\RR$.
	\end{enumerate}
	A morphism $f:X\to Y$ is a function $|X|\to |Y|$ such that $f\circ g\in M_Y$ whenever $g\in M_X$.
	
	Products and coproducts are given by
	\[
	\left|\prod_{j\in J}X_j\right|=\prod_{j\in J}|X_j|,
	\qquad
	M_{\prod_{j\in J}X_j}
	=
	\Bigl\{
	\prodfam{f_j}{j\in J}
	\ \Bigm|\
	f_j\in M_{X_j}
	\Bigr\},
	\]
	and
	\[
	\left|\coprod_{j\in J}X_j\right|=\coprod_{j\in J}|X_j|,
	\]
	\[
	M_{\coprod_{j\in J}X_j}
	=
	\Bigl\{
	\begin{aligned}
	&\, \lambda r.\,\iota_{j(n(r))}(f_{n(r)}(r))\\
	&\Bigm|\;
	n : \RR \to \NN \text{ measurable},\;
	j : \NN \to J,\;
	\forall m \in \NN.\ f_m \in M_{X_{j(m)}}
	\Bigr\}.
	\end{aligned}
	\]
	In particular, elements of $M_{\coprod_{j\in J}X_j}$ always factor through countably many coproduct components. When $J$ is countable, this simplifies to
	\[
	M_{\coprod_{j \in J} X_j}
	=
	\Bigl\{
	\, \lambda r.\,\iota_{j(r)}(f_{j(r)}(r))
	\ \Bigm|\
	j : \RR \to J \text{ measurable},\;
	f_j \in M_{X_j}
	\Bigr\}.
	\]
	
	Consider the canonical comparison
	\[
	d :
	\left(\coprod_{f : \prod_{j \in J} I_j} \prod_{j \in J} C_{j,f(j)}\right)
	\longrightarrow
	\left(\prod_{j \in J} \coprod_{i \in I_j} C_{ji}\right)
	\]
	in $\Qbs$. Its underlying function is a bijection, as in $\Set$. The obstruction is whether the inverse $d^{-1}$ is a morphism in $\Qbs$, i.e.\ whether $d^{-1}\circ g$ is always an element of
	\[
	M_{\coprod_{f : \prod_{j \in J} I_j} \prod_{j \in J} C_{j,f(j)}}
	\]
	for every
	\[
	g\in M_{\prod_{j \in J} \coprod_{i \in I_j} C_{ji}}.
	\]
	This holds if and only if the set
	\[
	\{\, j\in J\mid \# I_j\ge 2\,\}
	\]
	is finite. Hence $\Qbs$ is infinitary distributive but not doubly-infinitary distributive.
\end{remark}

\subsection{Categories of categorical structures}\label{sec:categorical-structures}

Consider $\mathbf{Cat}$, the category of small categories. One has
\[
\mathbf{Cat}\simeq \Fam(\ConCat),
\]
where $\ConCat$ is the full subcategory of connected categories.

\begin{remark}\label{rem:ConCat-not-product-complete}
	Although $\ConCat$ has finite products, it is not closed under small products. For example, let $C_n$ be the thin category on objects $\{0,1,\dots,n\}$ with order relations forming the fence
	\[
	0<1>2<3>\cdots.
	\]
	Each $C_n$ is connected. In the product $\prod_{n\in\NN}C_n$, the objects
	\[
	(0,0,0,\dots)
	\qquad\text{and}\qquad
	(0,1,2,3,\dots)
	\]
	cannot be joined by any finite zigzag: any such zigzag would have length at least $n$ in the $n$th coordinate. Hence the product is not connected.
	
	By contrast, the category of connected groupoids is closed under small products. Therefore the category of groupoids is doubly-infinitary distributive by Lemma~\ref{lem:in-terms-of-connected}.
\end{remark}

Nevertheless, $\mathbf{Cat}$ itself is doubly-infinitary distributive.

\begin{example}[$\mathbf{Cat}$, $\Pos$, $\wCpo$]
	The forgetful functor $\mathbf{Cat}\to \mathbf{Graph}$ creates small products and small coproducts. Since $\mathbf{Graph}$ is doubly-infinitary distributive by Example~\ref{ex:presheaves}, Lemma~\ref{lem:conservative-transfer} implies that $\mathbf{Cat}$ is doubly-infinitary distributive.
	
	The same argument applies to the category $\Pos$ of posets. Likewise, the forgetful functor $\wCpo\to\Pos$ creates small products and small coproducts, so $\wCpo$ is doubly-infinitary distributive as well.
\end{example}

\section{Final remarks and future work}\label{sec:final-remarks}

We have introduced doubly-infinitary distributive categories and studied the free doubly-infinitary distributive category
\[
\Dist(\catC)=\Fam\!\bigl(\Fam(\catC^{op})^{op}\bigr)
\]
on a category $\catC$, showing that it is always cartesian closed (Theorem~\ref{thm:ccc}).

Replacing $\Fam$ by its finitary variant $\FinFam$ yields the familiar distributive constructions. The distributive law
\[
\FinFam(\FinFam(\catC)^{op})^{op}\ \to\ \FinFam(\FinFam(\catC^{op})^{op})
\]
gives rise to finitary distributive categories, namely categories with finite products and finite coproducts satisfying the usual distributivity condition. Similarly, the distributive law
\[
\FinFam(\Fam(\catC)^{op})^{op}\ \to\ \Fam(\FinFam(\catC^{op})^{op})
\]
gives rise to infinitary distributive categories in the classical sense, namely categories with finite products and small coproducts in which finite products distribute over small coproducts.

As a small variation on our results, one obtains that
\[
\FinDist{\catC}=\FinFam\bigl((\FinFam(\catC^{op}))^{op}\bigr)
\]
is cartesian closed for any \emph{locally finite} category $\catC$.
\subsection{Further properties of \texorpdfstring{$\Dist(\catC)$}{Dist(C)}}

The category $\Dist(\catC)$ enjoys further properties inherited from being a free coproduct completion $\Fam(\catD)$ with $\catD=\Fam(\catC^{op})^{op}$. In particular, $\Dist(\catC)$ is extensive, and more generally satisfies standard exactness properties associated with $\Fam$; see~\cite{zbMATH07186728, zbMATH05306928, carboni1993introduction, nunes2024free, nunes2023chad, nunes2025unravelingiterativechad, zbMATH07799814, LV24b}.

\subsection{Non-canonical isomorphisms}

Definition~\ref{def:doubly-infinitary-distributivity} uses the invertibility of a canonical comparison. It is natural to ask whether the existence of a \emph{non-canonical} natural isomorphism suffices; compare Pisani's question for distributive categories, for instance~\cite{zbMATH06039246}.

The answer is affirmative. Using the general framework of~\cite{zbMATH07041646}, one sees that a category $\catC$ is doubly-infinitary distributive if it has products and coproducts and, for each family $(C_{ji})_{j\in J,\ i\in I_j}$, there exists a natural isomorphism
\[
\left(\coprod_{f\in \prod_{j\in J} I_j} \prod_{j\in J}C_{j,f(j)}\right)
\ \to\
\left(\prod_{j\in J} \coprod_{i\in I_j} C_{ji}\right).
\]
Equivalently, since a product- and coproduct-complete category is doubly-infinitary distributive if and only if $\coprod:\Fam(\catC)\to\catC$ preserves products (Remark~\ref{rem:concise-definition}), this follows from general results on preservation of limits from naturality of isomorphisms; see, for instance,~\cite{zbMATH06209910, zbMATH07041646}.

\subsection{Generalised categorical structures}

It was recently shown that categories of $(T,\catV)$-categories are extensive under broad hypotheses; see~\cite{zbMATH07377652}. The examples in Section~\ref{sec:examples} show that doubly-infinitary distributivity is subtler: it holds for $\mathbf{Cat}$ and for locally connected topological spaces, but fails for $\Top$.

This motivates the following problem.

\begin{openquestion}
	Under which conditions on $T$ and $\catV$ does the category of $(T,\catV)$-categories exhibit doubly-infinitary distributivity?
\end{openquestion}

\subsection{G\"odel's Dialectica interpretation}

The exponential formula of Theorem~\ref{thm:ccc} is a variation on the categorical manifestations of G\"odel's Dialectica interpretation~\cite{godel1958bisher, Hyland02, zbMATH04104952, zbMATH07648691}, on exponentials in categories of polynomials and containers~\cite{altenkirch2010higher}, and on Diller--Nahm style formulas~\cite{diller1974variante, Hyland02, nunes2023chad}. The precise relationship is discussed in the separate paper~\cite{LV24b}.

\section*{Acknowledgements}

The authors thank the anonymous referee for a careful reading of the paper and for several useful remarks that helped us improve the presentation. They are also grateful to Mat\'ias Menni for a helpful question concerning the extensivity of the category of locales, which led us to include a more precise discussion of this point.

This research was supported by the NWO Veni grant VI.Veni.202.124 and by the ERC project FoRECAST. It was also supported through the programme ``Oberwolfach Leibniz Fellows'' of the Mathematisches Forschungsinstitut Oberwolfach in 2022.

The first author gratefully acknowledges support from the Fields Institute for Research in Mathematical Sciences through a Fields Research Fellowship in 2023. He was also supported by the Centre for Mathematics of the University of Coimbra (CMUC, \url{https://doi.org/10.54499/UID/00324/2025}) under the Funda\c{c}\~ao para a Ci\^encia e a Tecnologia (FCT), through the grants UID/00324/2025 and UID/PRR/00324/2025. He also expresses his sincere gratitude to Henrique Bursztyn and to the Instituto Nacional de Matem\'atica Pura e Aplicada (IMPA) for their generous hospitality.

\section*{Statements and Declarations}

\subsection*{Ethics declaration}

Not applicable.

\subsection*{Funding}

This research was supported by the NWO Veni grant VI.Veni.202.124, the ERC project FoRECAST, and the programme ``Oberwolfach Leibniz Fellows'' of the Mathematisches Forschungsinstitut Oberwolfach. The first author was supported by the Fields Institute for Research in Mathematical Sciences through a Fields Research Fellowship in 2023, and by the Centre for Mathematics of the University of Coimbra (CMUC) under the Funda\c{c}\~ao para a Ci\^encia e a Tecnologia (FCT), through the grants UID/00324/2025 and UID/PRR/00324/2025.

\subsection*{Competing interests}

The authors declare that they have no competing interests.

\subsection*{Data availability}

Data sharing is not applicable to this article as no datasets were generated or analysed during the current study.

% The resolved bibliography is included directly in the stand-alone arXiv source.
\providecommand{\bibcommenthead}{}
\newenvironment{barticle}{}{}
\newenvironment{bbook}{}{}
\newenvironment{bchapter}{}{}
\newenvironment{botherref}{}{}
%% BioMed_Central_Bib_Style_v1.01

\end{document}